\renewcommand{\baselinestretch}{\baselinestretch}
\renewcommand{\baselinestretch}{1.1}
\numberwithin{equation}{section}
\newtheorem{thm}{Theorem}[section]
\newtheorem{lem}[thm]{Lemma}
\newtheorem{cor}[thm]{Corollary}
\theoremstyle{definition}
\theoremstyle{remark}
\newtheorem{rmk}[thm]{Remark}
\newcommand{\gen}{\text{gen}}
\newcommand{\spn}{\text{spn}}
\newcommand{\z}{{\mathbb Z}}
\newcommand{\mbw}{{\mathcal M_{B,\mathbf w}^{\mathbf s}}}
\newcommand{\mbz}{{\mathcal M_{B,\mathbf 0}^{\mathbf s}}}
\numberwithin{equation}{section}
\begin{document}
\title{Spinor representations of positive definite ternary quadratic forms}

\author{Jangwon Ju, Kyoungmin Kim and Byeong-Kweon Oh }

\address{Department of Mathematical Sciences, Seoul National University, Seoul 08826, Korea}
\email{jjw@snu.ac.kr}

\thanks{This work of the first author was supported by BK21 PLUS SNU Mathematical Sciences Division.}

\address{Department of Mathematical Sciences, Seoul National University, Seoul 08826, Korea}
\email{kiny30@snu.ac.kr}

\address{Department of Mathematical Sciences and Research Institute of Mathematics, Seoul National University, Seoul 08826, Korea}
\email{bkoh@snu.ac.kr}
\thanks{This work of the third author was supported by the National Research Foundation of Korea (NRF-2014R1A1A2056296).}

\subjclass[2000]{Primary 11E12, 11E20}

\keywords{Bell ternary forms, spinor representations}

\begin{abstract} For a positive definite integral ternary quadratic form $f$, let $r(k,f)$ be the number of representations of an integer $k$ by $f$.  The famous Minkowski-Siegel formula implies that if the class number of $f$ is one, then $r(k,f)$ can be written as a constant multiple of a product of local densities which are easily computable.  In this article, we consider the case when the spinor genus of $f$ contains only one class. In this case the above also holds if $k$ is not contained in a set of finite number of square classes which are easily computable (see, for example,  \cite{sp1} and \cite {sp2}).  By using this fact, we prove some extension of the results given in both \cite {cl} on the representations of generalized Bell ternary forms and   \cite {be} on the representations of  ternary quadratic forms with some congruence conditions.    
\end{abstract}

\maketitle

\section{introduction}
For a positive definite integral ternary quadratic form 
$$
f(x,y,z)=ax^2+by^2+cz^2+2pyz+2qzx+2rxy \  \ (a,b,c,p,q,r \in \z),
$$ 
and an integer $k$, we define $r(k,f)$  the number of representations of $k$ by $f$, that is, 
$$
r(k,f)=\vert \{ (x,y,z) \in \z^3 : f(x,y,z)=k\}\vert.
$$
Note that it is always finite, for we are assuming that $f$ is positive definite.  It seems to be quite a difficult problem to compute $r(k,f)$ effectively
for an arbitrary ternary quadratic form $f$.  The famous Minkowski-Siegel formula tells us that the weighted sum of the number of representations by the genus of $f$ is a constant multiple of  the product of local densities. To be more precise, we define 
$$
w(\gen(f))=\sum_{[f'] \in \gen(f)} \frac 1 {o(f')} \quad \text{and} \quad r(k,\gen(f))=\frac 1 {w(\gen(f))}\sum_{[f'] \in \gen(f)} \frac {r(k,f')}{o(f')}.
$$
Here $o(f')$ is the order of the isometry group of $f'$ and $[f']$ is the set of isometric classes containing the quadratic form $f'$. The Minkowski-Siegel formula says that
\begin{equation} \label{minsi}
r(k,\gen(f))=2\pi \sqrt{\frac {k}{df}} \times \prod_{p}\alpha_p({k,f}),
\end{equation}
where $\alpha_p$ is the local density depending only on the  structure of $f$ over $\z_p$ and an integer $k$. In particular, if the class number of $f$ is one, then 
$r(k,f)=r(k,\gen(f))$. 

 As a natural modification of the    Minkowski-Siegel formula, it was proved in \cite {k} and \cite {w} that if we define
$$
w(\spn(f))=\sum_{[f'] \in \spn(f)} \frac 1 {o(f')} \quad \text{and} \quad r(k,\spn(f))=\frac 1 {w(\spn(f))}\sum_{[f'] \in \spn(f)} \frac {r(k,f')}{o(f')},
$$
then $r(k,\spn(f))=r(k,\gen(f))$ for any integer $k$ that is not a splitting integer. In \cite {sp2}, Schulze-Pillot  gave  a formula for $r(k,\spn(f))-r(k,\spn(f'))$ for any $f'$ in the genus of $f$ and any splitting integer $k$.  Hence if there is only one class in the spinor genus of $f$, then we may have some formula on $r(k,f)$  by using $r(k,\gen(f))$. 

Recently, some interesting formulas on the number of representations of ternary  quadratic forms were proved by using some identities of $q$-series. The aim of this article is to reprove and extend some of those results by using spinor representation theory.    

In Section 2, we consider the representations of a generalized Bell ternary form defined by $f_{\alpha,\beta}(x,y,z)=x^2+2^{\alpha}y^2+2^{\beta}z^2 \ (\alpha \le \beta)$, where $\alpha$ and $\beta$ are  non negative integers. In \cite{b}, Bell gave a formula on the representations of integers by the form $f_{\alpha,\beta}(x,y,z)$, where $(\alpha,\beta) \in \{1,2,4,8\}$.  Recently,  H{\"u}rlimann  in \cite{cl} provided a formula, by using $q$-series identities, on the numbers of representations of $f_{\alpha,\beta}$, where $(\alpha,\beta)=(2,16), (8,16)$, and resolved Cooper and Lam's conjecture given in \cite{cl2} in these two cases.  In fact, the class numbers of those forms considered by Bell and these two forms are all one. Hence one may have  the same results by using the Minkowski-Siegel formula as follows:  for an integer $a$, we define $\text{sgn}(a)=1$ if $a$ is odd,  $\text{sgn}(a)=0$ otherwise, and $\widetilde{f_{\alpha,\beta}}(x,y,z)=x^2+y^2+2^{\text{sgn}(\alpha+\beta)}z^2$. Then one may easily show that $f_{\alpha,\beta}$ is isometric to $\widetilde{f_{\alpha,\beta}}$ over $\z_p$ for any odd prime $p$. Hence the Minkowski-Siegel formula  (\ref{minsi}) implies that 
\begin{equation} \label{minsi-3}
r(k,\gen(f_{\alpha,\beta}))=\frac 1{\sqrt{2^{\alpha+\beta-\text{sgn}(\alpha+\beta)}}}\cdot \frac {\alpha_2(k,f_{\alpha,\beta})}{\alpha_2(k,\widetilde{f_{\alpha,\beta}})}\cdot r(k,\widetilde{f_{\alpha,\beta}}).
\end{equation}
Since 
$$
r(2k,f_{0,1})=r(k,f_{0,0}) \ \text{and} \  r(2k+1,f_{0,1})=\frac13 r(4k+2,f_{0,0}),
$$
 $r(k,\gen(f_{\alpha,\beta}))$ (and $r(k,f_{\alpha,\beta})$ itself, if the class number of $f_{\alpha,\beta}$ is one)  can be written by using the number of representations of a sum of three squares.   Note that
$$
h(f_{\alpha,\beta})=1 \quad \text {if and only if} \quad \alpha,\beta \in \{1,2,4,8\} \ \ \text{or}\ \ (\alpha,\beta)=(2,16), \ (8,16).
$$

A similar argument can be applied when the spinor genus of $f_{\alpha,\beta}$ contains only one class. We show that this happens exactly when 
$$
(\alpha,\beta)=(1,16), \ (4,16), \ (8,64) \  \text{and} \ (16,16).
$$
Furthermore, we provide an exact formula  for $r(k,f_{\alpha,\beta})$ in these cases by using the number of representations of a sum of three squares.

In Section 3, we consider the representations of integers by a quadratic form with some congruence conditions. Let $n$ be a positive integer and let
$$
f(x_1,x_2,\dots,x_n)=\sum_{i,j=1}^n f_{ij} x_ix_j \quad (f_{i,j}=f_{ji} \in \z)
$$
be a positive definite quadratic form of rank $n$. Let $M_f=(f_{ij}) \in M_{n\times n}(\z)$ be the corresponding symmetric matrix.  
For a matrix    $B \in M_{n \times n}(\z)$ and $\mathbf w=(w_1,w_2,\dots,w_n), \mathbf s=(s_1,s_2,\dots,s_n)$, we define, for an integer $a$, 
$$
R_{B,\mathbf w}^{\mathbf s}(a,M_f)=\{ \mathbf x \in \z^n : \mathbf x^tM_f\mathbf x=a, \  B\mathbf x \equiv \mathbf w \pmod {\mathbf s}\},
$$
and for a quadratic form $g$ of rank $n$ whose corresponding symmetric matrix is $M_g$ and a vector $\mathbf z \in \z^n$,  we define, for an integer $b$,
$$
R(b;\mathbf z,M_g)=\{ \mathbf x \in \z^n : \mathbf x^tM_g\mathbf x+2\mathbf x^t \mathbf z=b\}.
$$
We show that there is a one to one correspondence between these two sets  if we choose parameters in each set suitably.  We also show that in some particular cases, for example, $\mathbf w=\mathbf 0$, representations of a quadratic form with some congruence conditions can be interpreted as  representations of a subform without congruence condition. If the corresponding subform has class number one or the spinor genus of the subform  contains only one class, then we may give a formula on the number of  representations of quadratic forms with some congruence conditions. By using this method, we reprove Theorems (1.9), (1,10) and (1,11) in \cite{be}, and  prove some extensions of  them. 

A (quadratic) $\z$-lattice $L=\z\mathbf x_1+\z \mathbf x_2+\dots+\z \mathbf x_n$ of rank $n$ is a free $\z$-module equipped with a bilinear form $B : L\times L \to \z$. We define the quadratic form corresponding to $L$ by  $f_L(x_1,x_2,\dots x_n)=\sum_{i,j=1}^n B(\mathbf x_i,\mathbf x_j)x_ix_j$.
 We also define the corresponding symmetric matrix $M_L=(B(\mathbf x_i,\mathbf x_j)) \in M_{n\times n}(\z)$. Note that these three terminologies are equivalent with each other. 
  If $M_L$ is diagonal, we briefly write
 $$
 L=\langle a_{11},a_{22},\dots,a_{nn}\rangle.
 $$
 
 In this article, we always assume that any quadratic form is {\it positive definite}. 

Any unexplained notations and terminologies can be found in \cite{ki} or \cite{om}.


\section{representations of generalized Bell ternary quadratic forms}
A ternary $\mathbb Z$-lattice $L$ is said to be a generalized Bell ternary $\mathbb Z$-lattice if  $L$ is isometric to   $\langle 1,2^{\alpha},2^{\beta} \rangle \ ( \alpha \leq \beta)$, for some non negative integers $\alpha, \beta$.
As noted in the introduction,  it is well known that there are exactly 12 generalized Bell ternary $\mathbb Z$-lattices having class number 1. 
For each  of these 12 lattices,  it is proved in \cite {b} and \cite {cl}   that the number of representations of an integer $k$  can be written as a constant multiple of  the number of representations of an integer, which is not necessarily same to $k$,  by a sum of three squares.
In this section, we prove similar results in the case when the spinor genus of a generalized Bell ternary lattice contains only one class.  

Let $L$ be a (positive definite integral) ternary $\mathbb Z$-lattice and let $p$ be a prime.
We define  a $\Lambda_p$-transformation as follows:
$$
\Lambda_p(L) = \{ \mathbf x \in L \mid Q(\mathbf x+\mathbf z) \equiv Q(\mathbf z) ~ (\text{mod} ~ p) ~ \text{for all} ~ \mathbf z \in L\}.
$$
Let $\lambda_p(L)$ be the primitive lattice obtained from $\Lambda_p(L)$ by scaling $V =  L\otimes \mathbb Q$ by a suitable rational number.

For $L'\in\text{gen}(L)~(L'\in\text{spn}(L))$ and any prime $p$, one may easily show that $\lambda_p(L')\in\text{gen}(\lambda_p(L))~(\lambda_p(L')\in\text{spn}(\lambda_p(L)), \text{~respectively})$. 
If we define $\text{gen}(L)/\sim$ the set of all classes in $\text{gen}(L)$, it is well known that  the map 
\begin{equation}\label{lambda}
\lambda_p : \text{gen}(L)/\sim~ \longrightarrow~ \text{gen}(\lambda_p(L))/\sim
\end{equation}
given by $[L']\mapsto[\lambda_p(L')]$ for any class $[L']\in\text{gen}(L)/\sim$ is well-defined and surjective (see \cite {wa}).
Furthermore,  the restriction map $\lambda_p$ to $\text{spn}(L)/\sim$ is also a surjective map onto $\text{spn}(\lambda_p(L))/\sim$. 
Hence 
$$
h(L)\geq h(\lambda_p(L)) \quad \text{and}  \quad h_s(L)\geq h_s(\lambda_p(L))
$$
 for any prime $p$, where $h(L)$ ($h_s(L)$) is the number of classes in the genus (spinor genus, respectively) of $L$.

\begin{lem}\label{Bell form}
For a generalized Bell ternary $\mathbb Z$-lattice $L$,  $h_s(L)=1$ and $h(L) \neq 1$ if and only if $L$ is isometric to one of the following $4$ lattices:
$$
L_1=\langle1,1,16\rangle, \quad L_2=\langle1,4,16\rangle,\quad L_3=\langle1,8,64\rangle,\quad L_4=\langle1,16,16\rangle.
$$
\end{lem}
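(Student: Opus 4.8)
The plan is to work purely 2-adically, since every generalized Bell ternary lattice $\langle 1,2^\alpha,2^\beta\rangle$ is unimodular (hence split by a hyperbolic plane in the relevant sense, and in any case has class number one locally) at every odd prime, so both the genus and the spinor genus are controlled entirely by the Jordan splitting at $p=2$. First I would recall the standard formulas for $h(L)$ and $h_s(L)$ of a ternary $\z$-lattice in terms of the idele-class group: $h(L)/h_s(L)$ equals the index $[J_\q : \q^\times J_L \theta(O^+_A(L))]$-type quantity, and more concretely for ternaries one has the explicit criterion (going back to the Brandt–Intrau tables and to Earnest–Hsia, Schulze-Pillot) that $h_s(L)=1$ can be read off from the local spinor norm groups $\theta(O^+(L_p))$ for all $p$, while $h(L)=1$ additionally requires the genus to collapse. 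So the strategy is: (1) reduce to $p=2$; (2) compute the Jordan splitting of $\langle 1,2^\alpha,2^\beta\rangle$ over $\z_2$ for each pair $(\alpha,\beta)$; (3) compute $\theta(O^+(L_2))$ from that splitting using O'Meara's tables (92:5, 93:20 in \cite{om}); (4) plug into the mass/class-number formulas to decide when $h_s=1$ and when $h=1$.

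The key computational step is (3)–(4). For a $\z_2$-lattice with Jordan splitting into components of scales $2^0, 2^{a_1}, 2^{a_2}, \dots$, the spinor norm group $\theta(O^+(L_2))$ is generated by the spinor norms of the individual Jordan components together with the products of norms of pairs, and the relevant fact is whether $\theta(O^+(L_2)) \supseteq \z_2^\times$ (equivalently contains all units). When the gaps between successive scales are small ($\le 2$) one tends to get the full unit group and then $h_s=1$ forces $h=1$; when some gap is large (here the jump to $16 = 2^4$ or $64 = 2^6$ is what matters) the unit group fails to be covered, the genus splits into several spinor genera, and one can have $h_s=1<h$. Concretely I would tabulate, among all $(\alpha,\beta)$ with $0\le\alpha\le\beta$: first the twelve known class-number-one cases (as recalled in the introduction), then check which remaining cases have $h_s=1$. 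The finiteness is automatic because once $\beta$ (or $\beta-\alpha$) is large enough the mass grows and $h_s>1$; so only a bounded search is needed, and the four lattices $L_1,\dots,L_4$ survive.

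In more detail for step (2): over $\z_2$, $\langle 1,1,16\rangle$ has Jordan splitting $\langle 1,1\rangle \perp \langle 16\rangle$ with a scale jump of $4$; $\langle 1,4,16\rangle$ splits as $\langle 1\rangle\perp\langle 4\rangle\perp\langle 16\rangle$; $\langle 1,8,64\rangle$ as $\langle 1\rangle\perp\langle 8\rangle\perp\langle 64\rangle$; $\langle 1,16,16\rangle$ as $\langle 1\rangle\perp\langle 16,16\rangle$. For each, compute $\theta(O^+(L_2))$: for instance $\theta(O^+(\langle 1,1\rangle)) = \z_2^\times\cdot(\q_2^\times)^2$ already contains only the squares and $\pm\text{(sums of two units)}$, which for the binary $\langle1,1\rangle\cong$ the norm form is $1+4\z_2$ times squares — the point is that it is a \emph{proper} subgroup of $\z_2^\times(\q_2^\times)^2$, and combining with the far-away $\langle 16\rangle$ component does not enlarge it enough to force $h=1$, yet it is large enough to force $h_s=1$. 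I would verify each of the four cases this way and, symmetrically, verify that every other $(\alpha,\beta)$ either already appears in the class-number-one list or has $h_s\ge 2$ (e.g. $\langle 1,1,64\rangle$, $\langle 1,2,16\rangle$ excluded because it is class number one, $\langle 1,16,64\rangle$, etc.).

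The main obstacle I anticipate is purely bookkeeping: correctly reading off the 2-adic spinor norm groups from O'Meara's case analysis (the behaviour at $p=2$ is notoriously delicate, with the relevant invariants being the scales, the unit-square classes of the Jordan components, and parity/weight conditions), and then being sure the search over $(\alpha,\beta)$ is genuinely exhaustive rather than merely stopping once the four examples are found. There is no conceptual difficulty — every tool needed (the $\lambda_p$-transformations recalled just above, O'Meara's spinor-norm tables, the ternary class-number formula) is standard — but getting all the 2-adic local invariants right for the boundary cases is where care is required.
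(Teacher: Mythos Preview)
Your plan conflates two different invariants. The local spinor norm groups $\theta(O^+(L_p))$ determine the number $g(L)$ of spinor genera in the genus via the idele-theoretic index formula, but they do \emph{not} determine $h_s(L)$, the number of classes inside the spinor genus of $L$. That quantity is genuinely global: two lattices in the same spinor genus are everywhere locally isometric and have the same spinor norm data, yet need not be globally isometric. So the sentence ``$h_s(L)=1$ can be read off from the local spinor norm groups'' is false, and since this is the core of your step (3)--(4), the argument as written has a gap. Your approach can be salvaged by combining the spinor-norm computation (to get $g(L)$) with the Minkowski--Siegel mass of the genus (to get the mass of each spinor genus), using that mass exceeding $1/2$ forces $h_s>1$; but then you still have to enumerate classes directly for the surviving small cases, and your write-up does not make this two-stage structure explicit.

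The paper's proof takes a different and shorter route. Rather than touching spinor norms or masses at all, it uses Watson's $\lambda_2$-transformation and the monotonicity $h_s(L)\ge h_s(\lambda_2(L))$ recalled just before the lemma. Iterating $\lambda_2$ collapses any $\langle 1,2^\alpha,2^\beta\rangle$ with large gaps down to one of a handful of base lattices (specifically $\langle 1,1,32\rangle$, $\langle 1,2,32\rangle$, or $\langle 1,2^5,2^\gamma\rangle$, $\langle 1,2^6,2^\delta\rangle$ with bounded exponents), each of which is checked to have $h_s\ge 2$; this rules out all but 23 explicit pairs $(\alpha,\beta)$, which are then inspected directly. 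The advantage is that the reduction step is a one-line monotonicity argument rather than a mass estimate, and the only ``computation'' is a finite table lookup.
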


\begin{proof}
Let $L  \simeq \langle1,2^\alpha,2^\beta\rangle \ (\alpha \le\beta)$ be a generalized Bell ternary lattice.  Assume that $\alpha +4 \leq \beta $ if $\alpha$ is odd, $\alpha +5 \leq \beta$ otherwise.  
Then $L$ can be transformed to one of  two lattices:
$$
\langle1,1,32\rangle, \quad \langle1,2,32\rangle
$$
by taking finite number of $\lambda_2$-transformations. Since
$$
h_s(\langle1,1,32\rangle) = h_s(\langle1,2,32\rangle) = 2,
$$
we have $h_s(L)\ge 2$ in this case.  Hence we may assume that $\beta \leq \alpha + 3$ if $\alpha$ is odd, $\beta \leq \alpha +4$ otherwise. 
Assume further that $\alpha \geq 5$.  Then $L$ can be transformed to one of the following lattices:
$$
\langle 1,2^5, 2^{\gamma} \rangle,\quad \langle 1,2^6,2^{\delta} \rangle,
$$
 where $5 \leq \gamma \leq 8$ and $6 \leq \delta \leq 10$,  by taking finite number of $\lambda_2$-transformations to $L$.  One can easily compute that
$$
h_s(\langle 1,2^5,2^{\gamma} \rangle)\geq 2,  \quad
h_s( \langle 1,2^6,2^{\delta} \rangle) \geq 2, 
$$
for any $5 \leq \gamma \leq 8$ and $6 \leq \delta \leq 10$.  Therefore we have $h_s(L)\ge 2$ in this case.
For the remaining 23 candidates, one can easily show that $h(L)=1$ or $h_s(L) \geq 2$ except the following 4 lattices
$$
\langle1,1,16\rangle, \quad \langle1,4,16\rangle,\quad \langle1,8,64\rangle,\quad \langle1,16,16\rangle.
$$ 
One may easily show that all of these 4 lattices have class number $2$ and their genera contain 2 spinor genera. This completes the proof.
\end{proof}

\begin{rmk}\label{other} 
{\rm  For each lattice $L_i$ defined above,  the other spinor genus in the genus of $L_i$ contains only one class  $[L_i']$ defined by 
$$
L_1'=\begin{pmatrix} 2&0&1\\0&2&1\\1&1&5\end{pmatrix}, \   L_2'=\begin{pmatrix} 4&0&0\\0&4&2\\0&2&5\end{pmatrix}, \   L_3'=\begin{pmatrix} 4&0&2\\0&8&0\\2&0&17\end{pmatrix}, \   L_4'=\begin{pmatrix} 4&2&2\\2&9&1\\2&1&9\end{pmatrix}. \
$$ }  
\end{rmk} 

We will show that the number of representations of an integer $k$ by each lattice $L_i$ in Lemma \ref{Bell form} can be written by using the number of representations of the integer $k$ by the lattice $\langle1,1,1\rangle$ or $\langle1,1,2\rangle$. 
For any non negative integer $k$, we define 
$$
\mathfrak{r}_1(k) = r(k, \langle1,1,1\rangle) \quad\text{and}\quad \mathfrak{r}_2(k) = r(k, \langle1,1,2\rangle).
$$
\begin{lem}\label{multiple}
Let $L_i$$(1\leq i\leq 4)$ be  a $\mathbb Z$-lattice in Lemma \ref{Bell form} and let $k=2^a(8t+\alpha)$ be an integer such that $a,t\in\mathbb{N}\cup\{0\}$ and $\alpha\in\{1,3,5,7\}$. Then we have 
$$
\begin{array}{ll}
 r(k,\text{gen}(L_i))=\begin{cases}c_i(a,\alpha)\cdot\mathfrak{r}_1(k) &\text{if $i=1,2,4,$}\\
c_i(a,\alpha)\cdot\mathfrak{r}_2(k) &\text{otherwise,}\end{cases}
\end{array}
$$
where the constant  $c=c_i(a,\alpha)$ depends only on $i,a$ and $\alpha$.
The values of  $c_i(a,\alpha)$ are given in Table $1$.  
\end{lem}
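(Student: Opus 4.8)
The plan is to use the reduction (\ref{minsi-3}) recorded in the introduction and thereby reduce the statement to a comparison of $2$-adic local densities. Write $k=2^a(8t+\alpha)$ as in the statement. Each $L_i$ is the generalized Bell lattice $\langle 1,2^{\alpha_i},2^{\beta_i}\rangle$ with $(\alpha_1,\beta_1)=(0,4)$, $(\alpha_2,\beta_2)=(2,4)$, $(\alpha_3,\beta_3)=(3,6)$, $(\alpha_4,\beta_4)=(4,4)$, so that $\widetilde{f_{\alpha_i,\beta_i}}=\langle 1,1,1\rangle$ for $i=1,2,4$ and $\widetilde{f_{\alpha_3,\beta_3}}=\langle 1,1,2\rangle$. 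Since $\langle 1,1,1\rangle$ and $\langle 1,1,2\rangle$ both have class number one, $r(k,\text{gen}(\langle 1,1,1\rangle))=\mathfrak r_1(k)$ and $r(k,\text{gen}(\langle 1,1,2\rangle))=\mathfrak r_2(k)$, and (\ref{minsi-3}) reads
$$
r(k,\text{gen}(L_i))=\frac{1}{\sqrt{2^{\,\alpha_i+\beta_i-\text{sgn}(\alpha_i+\beta_i)}}}\cdot\frac{\alpha_2(k,L_i)}{\alpha_2(k,\widetilde{f_{\alpha_i,\beta_i}})}\cdot\begin{cases}\mathfrak r_1(k)&\text{if }i=1,2,4,\\ \mathfrak r_2(k)&\text{if }i=3.\end{cases}
$$
Hence it suffices to show that, for $k=2^a(8t+\alpha)$, the quotient $\alpha_2(k,L_i)/\alpha_2(k,\widetilde{f_{\alpha_i,\beta_i}})$ is independent of $t$, and to evaluate it; then $c_i(a,\alpha)$ is $2^{-(\alpha_i+\beta_i-\text{sgn}(\alpha_i+\beta_i))/2}$ times that quotient, which makes it plain that $c$ depends only on $i,a,\alpha$.

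For the density computation I would argue directly from the definition $\alpha_2(k,f)=\lim_{N\to\infty}2^{-2N}\,\#\{\mathbf x\in(\z/2^N\z)^3:f(\mathbf x)\equiv k\pmod{2^N}\}$, using the Jordan splittings over $\z_2$, namely $L_1\simeq\langle 1,1\rangle\perp\langle 16\rangle$, $L_2\simeq\langle 1\rangle\perp\langle 4\rangle\perp\langle 16\rangle$, $L_3\simeq\langle 1\rangle\perp\langle 8\rangle\perp\langle 64\rangle$, $L_4\simeq\langle 1\rangle\perp\langle 16,16\rangle$, together with the splittings $\langle 1,1,1\rangle$ and $\langle 1,1,2\rangle$ of the comparison forms. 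For each of these six forms one checks that $\alpha_2(2^a(8t+\alpha),\cdot)$ does not depend on $t$: once $N$ exceeds a bound depending only on $a$ and on the largest scale $2^{\beta_i}$ (which is $\le 2^6$ throughout), the congruence $f(\mathbf x)\equiv k\pmod{2^N}$ constrains $\mathbf x$ only modulo a bounded power of $2$, so the normalized solution count stabilizes and depends on $k$ only through $a$ and $\alpha\bmod 8$; moreover it becomes constant in $a$ once $a$ passes a small threshold determined by $L_i$. Running through the resulting finite list of cases produces the values $c_i(a,\alpha)$ displayed in Table $1$.

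The conceptual input is therefore limited to (\ref{minsi-3}) — whose validity rests on the elementary fact, already noted in the introduction, that $f_{\alpha_i,\beta_i}$ and $\widetilde{f_{\alpha_i,\beta_i}}$ are isometric over $\z_p$ for every odd prime $p$ — together with the class number one property of $\langle 1,1,1\rangle$ and $\langle 1,1,2\rangle$. The only genuine obstacle is the $2$-adic density bookkeeping in the second step: pinning down, for each $L_i$, the short range of exponents $a$ in which the density has not yet stabilized, and verifying that even there the density sees the odd part of $k$ only modulo $8$. No idea beyond a careful application of the standard explicit local density formulas is needed.
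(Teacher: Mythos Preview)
Your proposal is correct and follows the same approach as the paper: the paper's proof is just the one sentence ``The lemma follows directly from the equation \eqref{minsi-3}. For the computations of local densities, see \cite{ya},'' so your reduction via \eqref{minsi-3} to a $2$-adic density comparison is exactly what is intended. The only minor difference is that the paper defers the bookkeeping to Yang's explicit local density formulas rather than arguing directly from the limit definition as you sketch, but this is a matter of presentation, not of strategy.
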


\begin{proof}
The lemma follows directly from the equation \eqref{minsi-3}. For the computations of local densities, see \cite {ya}.
\end{proof}

\begin{center}
\renewcommand{\arraystretch}{1.3}
\begin{tabular}{ll|ll|ll|llll}
\multicolumn{8}{l}{Table 1 Values of $c_i(a,\alpha)$}\\
\hline
$i=1$ & & $i=2$ & & $i=3$ &  &$i=4$& &\\ 
$(a,\alpha)$ & $c$ & $(a,\alpha)$ & $c$ & $(a,\alpha)$ & $c$ & $(a,\alpha)$ & \!$c$\\ \hline\hline
\!$(0,1)$ & \!$\frac13$ & $(0,1)$ & \!$\frac16$ & $(0,1)$ & \!$\frac14$ & $(0,1)$ & \!$\frac16$\\ 
\!$(0,5)$ & \!$\frac13$ & $(0,5)$ & \!$\frac16$ & $(2,1)$ & \!$\frac16$ & $(2,1)$ & \!$\frac13$\\
\!$(1,1)$ & \!$\frac13$ & $(2,1)$ & \!$\frac23$ & $(2,3)$ & \!$\frac16$ & $(2,5)$  & \!$\frac13$\\  
\!$(1,5)$ & \!$\frac13$ & $(2,5)$ & \!$\frac23$ & $(3,1)$ &  \!$\frac13$ & $(a,\alpha),\ (a\ge4)$  & \!$1$\\  
\!$(2,1)$ & \!$\frac23$ & $(3,\alpha)$ & \!$\frac13$ & $(3,3)$ & \!$\frac12$ &\text{otherwise}  & \!$0$\\
\!$(2,5)$ & \!$\frac23$ & $(a,\alpha),\ (a\ge4)$ & \!$1$ & $(4,\alpha)$ &  \!$\frac16$ & &\\
\!$(3,\alpha)$ & \!$\frac13$ &\text{otherwise} & \!$0$ & $(5,1)$ & \!$\frac13$ &  &\\
\!$(a,\alpha),\ (a\ge4)$ & \!$1$ & & &$(5,3)$ &\!$1$ &  &\\
\!\text{otherwise}&\!$0$ & & & $(5,5)$ & \!$\frac13$ & &\\  
 &&&&$(6,\alpha)$ & \!$\frac13$ &  &\\
&&&&$(a,\alpha),\ (a\ge7)$&\!$1$&&\\
&&&&\text{otherwise}&\!$0$&&&\\
\hline
\end{tabular}
\end{center} 

\newpage

\begin{thm}\label{main} 
Let $L_i ~ (1\leq i \leq 4)$ be a ternary $\z$-lattice in Lemma \ref{Bell form}.  For any integer $k=2^a(8t+\alpha)$ such that $a,t\in\mathbb{N}\cup\{0\}$ and $\alpha\in\{1,3,5,7\}$,
 we have 

$$
\begin{array}{ll}
r(k,L_1)= \begin{cases} 
\frac13 \mathfrak{r}_1(k) + \delta_{\square}(k)\cdot (-1)^{\frac{\sqrt{k}-1}{2}} \cdot 2\sqrt{k} &\text{if  $(a,\alpha) = (0,1)$}, \\
c_1(a,\alpha) \cdot \mathfrak{r}_1(k)  &\text{otherwise},
\end{cases} 
\\
\\
r(k,L_2)= \begin{cases}
\frac16 \mathfrak{r}_1(k) + \delta_{\square}(k)\cdot (-1)^{\frac{\sqrt{k}-1}{2}} \cdot \sqrt{k} &\text{if $(a,\alpha) = (0,1)$}, \\
c_2(a,\alpha) \cdot \mathfrak{r}_1(k)  &\text{otherwise},
\end{cases} 
\\
\\
r(k,L_3)= \begin{cases} 
\frac14 \mathfrak{r}_2(k) + \delta_{\square}(k)\cdot (-1)^{\frac{\sqrt{k}-1}{2}} \cdot (-1)^{\frac18 (k-1)} \cdot \sqrt{k} &\text{if  $(a,\alpha) = (0,1)$},\\
c_3(a,\alpha)\cdot \mathfrak{r}_2(k)  &\text{otherwise},
\end{cases} 
\\
\\
r(k,L_4)= \begin{cases} 
\frac16 \mathfrak{r}_1(k) + \delta_{\square}(k)\cdot (-1)^{\frac{\sqrt{k}-1}{2}} \cdot \sqrt{k} &\text{if $(a,\alpha) = (0,1)$},\\
c_4(a,\alpha) \cdot \mathfrak{r}_1(k)  &\text{otherwise},
\end{cases} 
\end{array}
$$
\\
where $\delta_{\square}(k)=1$ if $k$ is a square of an integer,  $\delta_{\square}(k)=0$ otherwise.
\end{thm}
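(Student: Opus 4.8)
The plan is to combine Lemma~\ref{multiple} with the formula of Schulze-Pillot for the difference of spinor genus averages at splitting integers. By Lemma~\ref{Bell form} each $L_i$ has $h(L_i)=2$ and $\text{gen}(L_i)$ splits into two spinor genera, one containing $[L_i]$ and the other containing the single class $[L_i']$ listed in Remark~\ref{other}. Since $r(k,\text{gen}(L_i))$ is the weighted average of $r(k,L_i)$ and $r(k,L_i')$ over the genus, while $r(k,\text{spn}(L_i))$ is the (unweighted, since $h_s=1$) value $r(k,L_i)$, the basic identity to exploit is
\begin{equation}\label{eq:genspn}
r(k,L_i) = r(k,\text{spn}(L_i)) = r(k,\text{gen}(L_i)) + \bigl(r(k,\text{spn}(L_i))-r(k,\text{spn}(L_i'))\bigr)\cdot\frac{w(\text{spn}(L_i'))}{w(\text{gen}(L_i))}.
\end{equation}
For $k$ not a splitting (splittable) integer the correction term vanishes by the theorem of \cite{k},\cite{w}, and then $r(k,L_i)=r(k,\text{gen}(L_i))=c_i(a,\alpha)\cdot\mathfrak{r}_j(k)$ by Lemma~\ref{multiple} --- this already gives the "otherwise" lines. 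So the real content is to identify which square classes of $k$ are splitting for each $L_i$, and to evaluate the correction term there.

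First I would determine the set of splitting integers for each $L_i$. This is a purely local computation at $p=2$ (the forms are unimodular away from $2$): one computes the relevant local spinor norm groups $\theta(O^+(L_{i,2}))$ and the idele class obstruction, following the standard recipe (e.g. Schulze-Pillot \cite{sp1},\cite{sp2}). The outcome should be that, among the square classes $2^a(8t+\alpha)$, only finitely many --- and a short check suggests only the class of $k$ with $(a,\alpha)=(0,1)$, i.e. $k\equiv 1\pmod 8$ --- can be splitting, and moreover among those only the ones with $k$ a perfect square actually carry a nonzero correction. This matches the shape of the stated formula, where the extra term is supported on $\delta_{\square}(k)$ and $(a,\alpha)=(0,1)$.

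Next, on that square class I would evaluate the correction term. The difference $r(k,\text{spn}(L_i))-r(k,\text{spn}(L_i'))$ is, by \cite{sp2}, essentially a coefficient of a weight-$3/2$ cusp form lying in the space spanned by theta series attached to $\text{gen}(L_i)$; because the spinor genus obstruction is an order-two character, this cusp form is an eigenform with an explicit Hecke/Eisenstein-type description, and on square arguments $k=m^2$ its coefficient is a simple multiple of $\sqrt{k}=m$ times a quadratic character of $m$. Concretely, I expect to show $r(m^2,L_i)-r(m^2,L_i') = \pm\, C_i \cdot m \cdot \chi_i(m)$ with $\chi_i$ a real character and $C_i$ a small rational constant, and then \eqref{eq:genspn} together with $w(\text{spn}(L_i'))/w(\text{gen}(L_i))$ (computable from the automorph orders of $L_i$ and $L_i'$, read off the matrices in Remark~\ref{other}) produces exactly the displayed terms $\delta_\square(k)(-1)^{(\sqrt k-1)/2}\cdot 2\sqrt k$, $\cdots\cdot\sqrt k$, $\cdots(-1)^{(k-1)/8}\sqrt k$. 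The sign factors $(-1)^{(\sqrt k-1)/2}$ and $(-1)^{(k-1)/8}$ are precisely the values of the quadratic characters mod $4$ and mod $8$ on $m=\sqrt k$, so identifying $\chi_i$ correctly pins them down.

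The main obstacle is the second step: correctly computing the local spinor norm groups at $2$ for these four specific $2$-adic lattices and extracting from \cite{sp2} the exact normalization of the cusp-form coefficient on square arguments --- sign and constant included. An efficient alternative that I would also carry out as a cross-check (and which may be cleaner to present) is to fix the constants by direct evaluation: compute $r(k,L_i)$, $r(k,L_i')$ and $\mathfrak{r}_j(k)$ for a handful of small squares $k=1,9,25,49,\dots$ using the explicit Gram matrices, verify the claimed identity numerically, and then invoke the finite-dimensionality of the relevant space of weight-$3/2$ forms (equivalently, the fact that both sides are determined by finitely many coefficients once the square-class support is known) to conclude it holds identically. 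Either route reduces the theorem to Lemma~\ref{multiple} plus a bounded, explicit computation.
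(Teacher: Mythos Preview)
Your approach is essentially the same as the paper's: combine Lemma~\ref{multiple} (giving the genus average) with Schulze-Pillot's formula from \cite{sp2} (giving the spinor-genus difference), and use $h_s(L_i)=h_s(L_i')=1$ to identify these with $r(k,L_i)$ and $r(k,L_i')$. The paper carries this out for $L_3$ explicitly: since $o(L_3)=o(L_3')=8$, the genus average is simply $\tfrac12(r(k,L_3)+r(k,L_3'))$, and then Korollar~2 of \cite{sp2} is cited directly to produce the difference $r(k,L_3)-r(k,L_3')$ in the stated closed form, so solving the resulting $2\times 2$ linear system finishes the argument; the other three cases are declared similar. Your weighted identity~\eqref{eq:genspn} is algebraically equivalent to this, and your proposed numerical cross-check via finite-dimensionality is unnecessary once one simply invokes Korollar~2.
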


\begin{proof}
Since proofs are quite similar to each other, we only provide the proof of the third case. The genus of $L_3$ consists of the following two lattices up to isometry:
$$
L_3=\langle 1,8,64 \rangle \quad\text{and}\quad L'_3= \begin{pmatrix} 4&0&2\\0&8&0\\2&0&17\end{pmatrix}.
$$
Note that $o(L_3) = o(L'_3)=8$. 
By Lemma \ref{multiple}, we have
\begin{equation}\label{1}
\frac{r(k,L_3)}{2} + \frac{r(k,L'_3)}{2} =c_3(a,\alpha) \cdot \mathfrak{r}_2(k),
\end{equation}
for any non negative integer $k$.  

On the other hand,  one may easily compute the difference 
$$
r(k, \text{spn}(L_3)) - r(k,\text{spn}(L_3'))
$$
by using Korollar 2 of \cite {sp2}.
Since $h_s(L_3)=h_s(L'_3)=1$ by Lemma \ref{Bell form}, we have
\begin{equation}\label{3}
r(k,L_3)-r(k,L'_3)=\begin{cases}  \delta_{\square}(k)\cdot (-1)^{\frac{\sqrt{k}-1}{2}} \cdot (-1)^{\frac18 (k-1)} \cdot 2\sqrt{k} &\text{if $k$ is odd},\\
0   &\text{otherwise}.\end{cases}
\end{equation}
From the equations \eqref{1}, and \eqref{3} we have the third equality in the theorem.
\end{proof}
\section{Representations of quadratic forms with some congruence conditions}
In this section, we consider representations of a quadratic form with some congruence conditions. We show that there is a one to one correspondence between representations of a quadratic form with some congruence conditions  and representations of a quadratic polynomial which is suitably chosen.  We also show that in some particular cases, representations of a quadratic form with some congruence conditions can be interpreted as  representations of a subform without congruence condition. If the corresponding subform has class number one or the spinor genus of the subform  contains only one class, then we may give a formula on the number of  representations of a quadratic form with some congruence conditions. By using this method, We reprove  Theorems (1.9), (1.10) and (1.11) in \cite{be},  and  prove some extensions of them. From now on, $\z^n$ denotes the set of $n\times 1$ column vectors. 

Let $L$ be a $\mathbb Z$-lattice of rank $n$ with a matrix presentation $M$ and let  $a$ be an integer.  
For an integral matrix $B=(b_{ij}) \in M_{n\times n}(\z)$ and vectors $\mathbf w=(w_1,w_2,\dots,w_n)^t \in\mathbb Z^n$, $\mathbf s=(s_1,s_2,\dots,s_n)^t\in\mathbb Z^n$, we define
$$
\mathcal M_{B,\mathbf w}^{\mathbf s} =\{ \mathbf x \in \z^n : B\mathbf x  \equiv \mathbf w \pmod {\mathbf s}\}.
$$
Here, for any two vectors $\mathbf w=(w_1,w_2,\dots,w_n)^t$ and $\mathbf w'=(w_1',w_2',\dots,w_n')^t$ in $\z^n$, we say $\mathbf w \equiv \mathbf  w' \pmod{\mathbf s}$ if $w_i \equiv w_i' \pmod{s_i}$ for any $i=1,2,\dots,n$. We always assume that  $\mathbf y \in \mathcal M_{B,\mathbf w}^{\mathbf s} \ne \emptyset$. Clearly, $\mathbf y+\mathcal M_{B,\mathbf 0}^{\mathbf s}=\mathcal M_{B,\mathbf w}^{\mathbf s}$. Since $\mbz$ is a free $\z$-module of rank $n$, there is a basis $\{\mathbf u_i \in \z^n\}_{i=1}^n$ for $\mbz$ such that $\mbz=\z \mathbf u_1+\z \mathbf u_2+\cdots+\z \mathbf u_n$. Define $\mathcal U_{B,\mathbf 0}^{\mathbf s}=[\mathbf u_1,\mathbf u_2,\dots,\mathbf u_n] \in M_{n\times n}(\z)$ so that $\text{Im}(\mathcal U_{B,\mathbf 0}^{\mathbf s})=\mbz$. 
We also define the smallest free $\z$-module containing the left coset $\mbw$ by $\widetilde{\mbw}$, which is, in fact,
$$
\widetilde{\mbw}=\z\mathbf y+\mbz.
$$
We define  $d_{B,\mathbf w}^{\mathbf s}=[ \widetilde{\mbw},\mbz]$. Note that $d_{B,\mathbf w}^{\mathbf s}$ is the smallest positive integer such that $d_{B,\mathbf w}^{\mathbf s}\cdot\mathbf y \in \mathcal M_{B,\mathbf 0}^{\mathbf s}$.
 For a fixed basis $\{\mathbf v_i\}_{i=1}^n$ for $\widetilde{\mbw}$, we define $\widetilde{\mathcal V_{B,\mathbf w}^{\mathbf s}}=[\mathbf v_1,\mathbf v_2,\dots,\mathbf v_n] \in  M_{n \times n}(\z)$.   Finally, we define quadratic forms
$$
M_{B,\mathbf 0}^{\mathbf s}= (\mathcal U_{B,\mathbf 0}^{\mathbf s})^t M\mathcal U_{B,\mathbf 0}^{\mathbf s}\quad \text{and} \quad \widetilde{M_{B,\mathbf w}^{\mathbf s}}=(\widetilde{\mathcal V_{B,\mathbf w}^{\mathbf s}})^tM\widetilde{\mathcal V_{B,\mathbf w}^{\mathbf s}}.
$$
Note that $\widetilde{M_{B,\mathbf w}^{\mathbf s}}$ is independent of the choice of basis $\{\mathbf v_i\}_{i=1}^n$ up to isometry.

 In this section, we consider the set
 $$
R_{B,\mathbf w}^{\mathbf s} (a,M)=\{\mathbf x\in \mathcal M_{B,\mathbf w}^{\mathbf s}: {\mathbf x}^tM{\mathbf x}=a\} \ \text{and} \  r_{B,\mathbf w}^{\mathbf s} (a,M)=\vert R_{B,\mathbf w}^{\mathbf s} (a,M)\vert.
$$
For an integer $b$, a vector $\mathbf z \in \z^n$ and a quadratic form $N$, we define 
$$
R(a;\mathbf z,N)=\{ \mathbf x \in \z^n :{\mathbf x}^tN{\mathbf x}+2\mathbf x^t\mathbf z=a\}.
$$
The following lemma says that there is a one to one correspondence between representations of a quadratic form with congruence conditions and representations of a quadratic polynomial which is suitably chosen. 
\begin{thm}  \label{corr}  Let $M, a, B, \mathbf w, \mathbf s$  and $\mathbf y$  be  given as above.  
Then the map $\Phi: \mathbf x \rightarrow \mathcal (U_{B,\mathbf 0}^{\mathbf s})^{-1}(\mathbf x-\mathbf y)$ from $R_{B,\mathbf w}^{\mathbf s}(a,M)$ to $R(a-\mathbf y^tM\mathbf y;(\mathcal U_{B,\mathbf 0}^{\mathbf s})^tM\mathbf y,M_{B,\mathbf 0}^{\mathbf s})$ is a bijective map.  Conversely, for a set $R(a;\mathbf z,N)$, there are  $M',a', B',\mathbf w',\mathbf s'$  and $\mathbf y'$ such that  
$$
\Phi(R_{B',\mathbf w'}^{\mathbf s'}(a',M'))=R(a;\mathbf z,N).
$$
  \end{thm}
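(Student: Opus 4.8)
The plan is to prove the two assertions separately: the forward statement by a direct ``complete the square'' computation, and the converse by reverse‑engineering the data of the forward statement after one preliminary rescaling.

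\emph{The forward map.} Write $\mathcal U=\mathcal U_{B,\mathbf 0}^{\mathbf s}$, so that $\text{Im}(\mathcal U)=\mbz$ and $\mathcal U^tM\mathcal U=M_{B,\mathbf 0}^{\mathbf s}$. First I would check that $\Phi$ lands in the stated target. If $\mathbf x\in R_{B,\mathbf w}^{\mathbf s}(a,M)$ then, using that $\mathbf y\in\mbw$ as well, $B(\mathbf x-\mathbf y)=B\mathbf x-B\mathbf y\equiv\mathbf 0\pmod{\mathbf s}$, so $\mathbf x-\mathbf y\in\mbz=\text{Im}(\mathcal U)$ and $\mathbf t:=\Phi(\mathbf x)=\mathcal U^{-1}(\mathbf x-\mathbf y)\in\z^n$. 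Writing $\mathbf x=\mathbf y+\mathcal U\mathbf t$ and using $M^t=M$ gives
$$
\mathbf x^tM\mathbf x=\mathbf y^tM\mathbf y+2\mathbf t^t(\mathcal U^tM\mathbf y)+\mathbf t^tM_{B,\mathbf 0}^{\mathbf s}\mathbf t,
$$
so $\mathbf x^tM\mathbf x=a$ is equivalent to $\mathbf t\in R(a-\mathbf y^tM\mathbf y;\,\mathcal U^tM\mathbf y,\,M_{B,\mathbf 0}^{\mathbf s})$. The map $\Phi$ is injective because $\mathcal U$ is nonsingular, and surjective because for any $\mathbf t$ in the target the vector $\mathbf x:=\mathbf y+\mathcal U\mathbf t\in\z^n$ satisfies $\mathbf x^tM\mathbf x=a$ by the same identity and $B\mathbf x=B\mathbf y+B(\mathcal U\mathbf t)\equiv\mathbf w\pmod{\mathbf s}$ since $\mathcal U\mathbf t\in\text{Im}(\mathcal U)=\mbz$; hence $\mathbf x\in R_{B,\mathbf w}^{\mathbf s}(a,M)$ with $\Phi(\mathbf x)=\mathbf t$. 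This direction is routine linear algebra and I expect no difficulty.

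\emph{The converse, construction.} The obstruction I anticipate is that $\mathbf z$ need not lie in $N\z^n$, so completing the square over $\z$ is not possible directly; the fix is to rescale first. For any positive integer $m$ the polynomials $\mathbf x^tN\mathbf x+2\mathbf x^t\mathbf z-a$ and $\mathbf x^t(mN)\mathbf x+2\mathbf x^t(m\mathbf z)-ma$ have identical integral zero sets, so $R(a;\mathbf z,N)=R(ma;\,m\mathbf z,\,mN)$; taking $m$ to be the denominator of the rational number $\mathbf z^tN^{-1}\mathbf z$ and replacing $(a,\mathbf z,N)$ by $(ma,m\mathbf z,mN)$, I may assume $\mathbf z^tN^{-1}\mathbf z\in\z$. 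Now set $\mathbf r:=N^{-1}\mathbf z\in\q^n$ and $\Lambda:=\z^n+\z\mathbf r$, a free $\z$-module of rank $n$ containing $\z^n$. The symmetric bilinear form $(\mathbf u,\mathbf v)\mapsto\mathbf u^tN\mathbf v$ takes integer values on $\Lambda$, since $\mathbf e_i^tN\mathbf r=\mathbf e_i^t\mathbf z\in\z$ and $\mathbf r^tN\mathbf r=\mathbf z^tN^{-1}\mathbf z\in\z$. Fixing a $\z$-basis $\mathbf b_1,\dots,\mathbf b_n$ of $\Lambda$ and letting $\mathcal B:=[\mathbf b_1\,|\,\cdots\,|\,\mathbf b_n]\in M_{n\times n}(\q)$, I would define
$$
M':=\mathcal B^tN\mathcal B,\qquad \mathcal U':=\mathcal B^{-1},\qquad \mathbf y':=\mathcal B^{-1}\mathbf r,\qquad a':=a+(\mathbf y')^tM'\mathbf y'.
$$
Then $M'$ is integral (its entries are the $\mathbf b_i^tN\mathbf b_j$) and positive definite (congruent to $N$); $\mathcal U'\in M_{n\times n}(\z)$ and is nonsingular because $\z^n\subseteq\Lambda$; $\mathbf y'\in\z^n$ because $\mathbf r\in\Lambda$; and a short computation gives $(\mathcal U')^tM'\mathcal U'=N$, $\;(\mathcal U')^tM'\mathbf y'=N\mathbf r=\mathbf z$, and $a'-(\mathbf y')^tM'\mathbf y'=a$.

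\emph{The converse, conclusion.} It remains to exhibit $B',\mathbf s'$ for which $\mathcal M_{B',\mathbf 0}^{\mathbf s'}=\text{Im}(\mathcal U')$, so that $\mathcal U'$ is an admissible choice of $\mathcal U_{B',\mathbf 0}^{\mathbf s'}$. Picking a positive integer $D$ with $D\mathcal B\in M_{n\times n}(\z)$ and setting $B':=D\mathcal B$, $\mathbf s':=(D,\dots,D)^t$, one has $\mathcal M_{B',\mathbf 0}^{\mathbf s'}=\{\mathbf x\in\z^n:\mathcal B\mathbf x\in\z^n\}=\mathcal B^{-1}\z^n=\text{Im}(\mathcal U')$, using that $\mathcal B^{-1}$ is integral (more structurally, any finite‑index sublattice of $\z^n$ occurs as some $\mathcal M_{B',\mathbf 0}^{\mathbf s'}$, by the Smith normal form). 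Finally put $\mathbf w':=B'\mathbf y'$, so $\mathbf y'\in\mathcal M_{B',\mathbf w'}^{\mathbf s'}\neq\emptyset$. Applying the forward statement to the data $(M',a',B',\mathbf w',\mathbf s',\mathbf y')$, the map $\Phi$ is a bijection from $R_{B',\mathbf w'}^{\mathbf s'}(a',M')$ onto $R(a'-(\mathbf y')^tM'\mathbf y';\,(\mathcal U')^tM'\mathbf y',\,M_{B',\mathbf 0}^{\mathbf s'})$, and by the three identities above this last set equals $R(a;\mathbf z,N)$. The only genuinely delicate step is the initial rescaling by the denominator of $\mathbf z^tN^{-1}\mathbf z$: it is precisely what forces $M'=\mathcal B^tN\mathcal B$ to come out integral, and without it the construction would only yield a rational, hence inadmissible, form. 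Everything else is bookkeeping.
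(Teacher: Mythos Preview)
Your forward direction is correct and essentially identical to the paper's. Your converse is also correct, but the construction differs from the paper's.

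The paper's converse is a one-line trick: it takes $\mathcal U'=N$ (viewing the symmetric matrix $N$ itself as the basis matrix for a sublattice of $\z^n$, via the Invariant Factor Theorem), then sets $M'=\det(N)\,N^{-1}$ (the adjugate, hence automatically integral), $\mathbf y'=\mathbf z$, $\mathbf w'=B'\mathbf z$, and $a'=(\det N)a+\mathbf z^t\det(N)N^{-1}\mathbf z$. With these choices one gets $M_{B',\mathbf 0}^{\mathbf s'}=N^tM'N=(\det N)N$, $(\mathcal U')^tM'\mathbf y'=(\det N)\mathbf z$, and $a'-(\mathbf y')^tM'\mathbf y'=(\det N)a$, so the target of $\Phi$ is $R((\det N)a;\,(\det N)\mathbf z,\,(\det N)N)=R(a;\mathbf z,N)$. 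No preliminary rescaling, no superlattice, no basis of $\Lambda$.

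Your route---rescale so that $\mathbf z^tN^{-1}\mathbf z\in\z$, then complete the square via the superlattice $\Lambda=\z^n+\z N^{-1}\mathbf z$ and take $M'=\mathcal B^tN\mathcal B$, $\mathcal U'=\mathcal B^{-1}$---is more geometric: it makes transparent that the congruence condition encodes a coset of a finite-index sublattice and pinpoints the one integrality obstruction (the value $\mathbf r^tN\mathbf r$). The paper's route is slicker and avoids any case analysis or auxiliary lattice, at the cost of being less conceptually motivated; your approach gives more insight but requires several extra verifications. Both are valid and arrive at the same conclusion.
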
 

\begin{proof}
Let $\mathbf x\in R_{B,\mathbf w}^{\mathbf s}(a,M)$. Since $\mathbf x-\mathbf y \in \mbz$, we have $(\mathcal U_{B,\mathbf 0}^{\mathbf s})^{-1}(\mathbf x-\mathbf y)\in\mathbb Z^n$.
Hence one may easily check that 
$$
(\mathcal U_{B,\mathbf 0}^{\mathbf s})^{-1}(\mathbf x-\mathbf y)\in R(a-\mathbf y^tM\mathbf y;(\mathcal U_{B,\mathbf 0}^{\mathbf s})^tM\mathbf y,M_{B,\mathbf 0}^{\mathbf s}).
$$ 
Therefore the map $\Phi$ is well-defined.  For any $\mathbf z \in R(a-\mathbf y^tM\mathbf y;(\mathcal U_{B,\mathbf 0}^{\mathbf s})^tM\mathbf y,M_{B,\mathbf 0}^{\mathbf s})$, if we define $\Psi(\mathbf z)=U_{B,\mathbf 0}^{\mathbf s}\mathbf z+\mathbf y$, then $\Phi \circ \Psi =\Psi\circ\Phi=\text{Id}$.

To prove the converse, let $N=[\mathbf t_1,\mathbf t_2,\dots,\mathbf t_n]$. 
By Invariant Factor Theorem, there is a basis $\{\mathbf e_i\}_{i=1}^n$ for $\z^n$  and integers $s_i$ for $1\le i\le n$ such that $s_i \mid s_{i+1}$ and 
$$
\z \mathbf t_1+\z \mathbf t_2+\cdots+\z \mathbf t_n=\z(s_1 \mathbf e_1)+\z(s_2\mathbf e_2)+\cdots+\z(s_n\mathbf  e_n).
$$
If we define $B'=[\mathbf e_1,\mathbf e_2,\dots,\mathbf e_n]^{-1} \in M_{n\times n}(\z)$ and $\mathbf s'=(s_1',s_2',\dots,s_n')$, then one may easily show that 
$$
\z \mathbf t_1+\z \mathbf t_2+\cdots+\z\mathbf t_n=\mathcal M_{B',\mathbf 0}^{\mathbf s'}\quad \text{and} \quad N=\mathcal U_{B',\mathbf 0}^{\mathbf s'}.
$$
If we define $\mathbf y'=\mathbf z$, $\mathbf w'=B'\mathbf z$,
$$
  M'=\det(N)N^{-1}\quad \text{and} \quad a'=(\det N)a+\mathbf z^t\det(N)N^{-1}\mathbf z,
$$
then one may easily check the above map $\Phi$  is a bijective map from $R_{B',\mathbf w'}^{\mathbf s'}(a',M')$ to $R(a;\mathbf z,N)$.
\end{proof}

\begin{lem}  \label{formul} Under the same notations given above, we have 
$$
r(a,\widetilde{M_{B,\mathbf w}^{\mathbf s}})=r_{B,\mathbf 0}^{\mathbf s}(a,M)+\sum_{k=1}^{d_{B,\mathbf w}^{\mathbf s}-1} r_{B,k\mathbf w}^{\mathbf s}(a,M).
$$ 
\end{lem}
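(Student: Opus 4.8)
The plan is to interpret $r(a,\widetilde{M_{B,\mathbf w}^{\mathbf s}})$ as a count of lattice points on the quadric $\mathbf x^tM\mathbf x=a$ lying in the free module $\widetilde{\mbw}=\z\mathbf y+\mbz$, and then to decompose $\widetilde{\mbw}$ into the cosets of $\mbz$ that it contains. First I would note that the map $\mathbf c\mapsto\widetilde{\mathcal V_{B,\mathbf w}^{\mathbf s}}\,\mathbf c$ is a bijection from $\z^n$ onto $\widetilde{\mbw}$ under which $\mathbf x^tM\mathbf x=\mathbf c^t\widetilde{M_{B,\mathbf w}^{\mathbf s}}\mathbf c$ whenever $\mathbf x=\widetilde{\mathcal V_{B,\mathbf w}^{\mathbf s}}\,\mathbf c$; since $\widetilde{M_{B,\mathbf w}^{\mathbf s}}$ is independent of the chosen basis up to isometry, this gives
$$
r(a,\widetilde{M_{B,\mathbf w}^{\mathbf s}})=\bigl\lvert\{\mathbf x\in\widetilde{\mbw}:\mathbf x^tM\mathbf x=a\}\bigr\rvert .
$$

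Next, writing $d=d_{B,\mathbf w}^{\mathbf s}$, I would use $\widetilde{\mbw}=\z\mathbf y+\mbz$ to see that the finite group $\widetilde{\mbw}/\mbz$ is cyclic, generated by the class of $\mathbf y$, and — as already recorded — has order $d$, the least positive integer with $d\mathbf y\in\mbz$. Consequently the cosets $k\mathbf y+\mbz$ for $0\le k\le d-1$ are pairwise distinct and cover $\widetilde{\mbw}$, so
$$
\widetilde{\mbw}=\bigsqcup_{k=0}^{d-1}\bigl(k\mathbf y+\mbz\bigr).
$$
Each coset is then identified with a congruence class: since $\mathbf y\in\mbw$ means $B\mathbf y\equiv\mathbf w\pmod{\mathbf s}$, a vector $\mathbf x\in\z^n$ lies in $k\mathbf y+\mbz$ exactly when $B(\mathbf x-k\mathbf y)\equiv\mathbf 0\pmod{\mathbf s}$, i.e. when $B\mathbf x\equiv k\mathbf w\pmod{\mathbf s}$; hence $k\mathbf y+\mbz=\mathcal M_{B,k\mathbf w}^{\mathbf s}$.

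Intersecting the displayed disjoint union with $\{\mathbf x:\mathbf x^tM\mathbf x=a\}$ and counting then gives
$$
r(a,\widetilde{M_{B,\mathbf w}^{\mathbf s}})=\sum_{k=0}^{d-1}r_{B,k\mathbf w}^{\mathbf s}(a,M)=r_{B,\mathbf 0}^{\mathbf s}(a,M)+\sum_{k=1}^{d_{B,\mathbf w}^{\mathbf s}-1}r_{B,k\mathbf w}^{\mathbf s}(a,M),
$$
which is the asserted identity. The argument is almost entirely formal; the one point I would treat with care — and which I regard as the crux — is the claim that $\widetilde{\mbw}/\mbz$ is cyclic of order exactly $d_{B,\mathbf w}^{\mathbf s}$ with the class of $\mathbf y$ as a generator, so that the $d$ cosets $k\mathbf y+\mbz$ $(0\le k\le d-1)$ are distinct and exhaust $\widetilde{\mbw}$. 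This follows at once from the definition $\widetilde{\mbw}=\z\mathbf y+\mbz$ (which forces cyclicity with generator the class of $\mathbf y$) together with the fact, already noted in the text, that $d_{B,\mathbf w}^{\mathbf s}=[\widetilde{\mbw},\mbz]$ is the smallest positive integer $d$ with $d\mathbf y\in\mbz$: distinctness is exactly minimality of $d$, and exhaustion follows since there are precisely $[\widetilde{\mbw},\mbz]=d$ cosets altogether.
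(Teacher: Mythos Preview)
Your proof is correct and follows exactly the same approach as the paper: the paper's proof is a single sentence stating that $\widetilde{\mathcal M_{B,\mathbf w}^{\mathbf s}}$ is the disjoint union of the cosets $k\mathbf y+\mathcal M_{B,\mathbf 0}^{\mathbf s}=\mathcal M_{B,k\mathbf w}^{\mathbf s}$ for $k=0,1,\dots,d_{B,\mathbf w}^{\mathbf s}-1$, and you have simply written out the details behind that sentence.
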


\begin{proof} The lemma follows directly from the fact that the $\z$-module $\widetilde {\mathcal M_{B,\mathbf w}^{\mathbf s}}$ is a disjoint union of left cosets $k\mathbf y+\mathcal M_{B,\mathbf 0}^{\mathbf s}=\mathcal M_{B,k\mathbf w}^{\mathbf s}$ for $k=0,1,\dots,d_{B,\mathbf w}^{\mathbf s}\!\!-1$. 
\end{proof}
\begin{cor}  \label{want}  Under the same notations given above, we have the followings:
\begin{itemize}
\item [(i)]  We have $r_{B,\mathbf 0}^{\mathbf s}(a,M)=r(a,M_{B,\mathbf 0}^{\mathbf s})$.
\item [(ii)] If $d_{B,\mathbf w}^{\mathbf s}=2$, then $r_{B,\mathbf w}^{\mathbf s}(a,M)=r(a,\widetilde{M_{B,\mathbf w}^{\mathbf s}})-r(a,M_{B,\mathbf 0}^{\mathbf s})$.
\item [(iii)]   The map $\mathbf x \to -\mathbf x$ from $R_{B,\mathbf w}^{\mathbf s}(a,M)$ 
to $R_{B,-\mathbf w}^{\mathbf s}(a,M)$ is bijective.  In particular, if $d_{B,\mathbf w}^{\mathbf s}=3$, then $r_{B,\mathbf w}^{\mathbf s}(a,M)=\displaystyle \frac 12\left(r(a,\widetilde{M_{B,\mathbf w}^{\mathbf s}})-r(a,M_{B,\mathbf 0}^{\mathbf s})\right)$.
\item [(iv)] Assume that for $s=\gcd(s_1,s_2,\dots,s_n)$, $\gcd(\det B,s)=\gcd(a,s)=1$. Then for any $k$ such that $\gcd(s,k)>1$, $r_{B,k\mathbf w}^{\mathbf s}(a,M)=0$.
\end{itemize}
\end{cor}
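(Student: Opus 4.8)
The plan is to derive all four parts from Theorem~\ref{corr}, Lemma~\ref{formul}, and the elementary description of the cosets $\mathcal M_{B,k\mathbf w}^{\mathbf s}$, together with the fact already recorded above that $d_{B,\mathbf w}^{\mathbf s}$ is the least positive integer with $d_{B,\mathbf w}^{\mathbf s}\mathbf y\in\mathcal M_{B,\mathbf 0}^{\mathbf s}$, equivalently with $d_{B,\mathbf w}^{\mathbf s}\mathbf w\equiv\mathbf 0\pmod{\mathbf s}$. For (i), I would apply Theorem~\ref{corr} in the case $\mathbf w=\mathbf 0$ with the choice $\mathbf y=\mathbf 0\in\mathcal M_{B,\mathbf 0}^{\mathbf s}$. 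Then $\mathbf y^tM\mathbf y=0$ and $(\mathcal U_{B,\mathbf 0}^{\mathbf s})^tM\mathbf y=\mathbf 0$, so $\Phi$ is a bijection from $R_{B,\mathbf 0}^{\mathbf s}(a,M)$ onto $R(a;\mathbf 0,M_{B,\mathbf 0}^{\mathbf s})$, and by definition $R(a;\mathbf 0,N)$ is just the set of representations of $a$ by the form $N$; thus $r_{B,\mathbf 0}^{\mathbf s}(a,M)=r(a,M_{B,\mathbf 0}^{\mathbf s})$. (Equivalently one can note that the columns of $\mathcal U_{B,\mathbf 0}^{\mathbf s}$ are a $\z$-basis of $\mathcal M_{B,\mathbf 0}^{\mathbf s}$, so $\mathbf x\mapsto(\mathcal U_{B,\mathbf 0}^{\mathbf s})^{-1}\mathbf x$ is a norm-preserving bijection from $(\mathcal M_{B,\mathbf 0}^{\mathbf s},M)$ to $(\z^n,M_{B,\mathbf 0}^{\mathbf s})$.)

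For (ii) and (iii) I would first record two auxiliary facts. The first is that $\mathcal M_{B,k\mathbf w}^{\mathbf s}=k\mathbf y+\mathcal M_{B,\mathbf 0}^{\mathbf s}$ for every integer $k$, since $B\mathbf x\equiv k\mathbf w\pmod{\mathbf s}$ holds precisely when $B(\mathbf x-k\mathbf y)\equiv\mathbf 0\pmod{\mathbf s}$. The second is that if $d_{B,\mathbf w}^{\mathbf s}=3$ then $3\mathbf w\equiv\mathbf 0\pmod{\mathbf s}$, hence $2\mathbf w\equiv-\mathbf w\pmod{\mathbf s}$ and so $\mathcal M_{B,2\mathbf w}^{\mathbf s}=\mathcal M_{B,-\mathbf w}^{\mathbf s}$. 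Granting these, part (ii) follows at once: Lemma~\ref{formul} with $d_{B,\mathbf w}^{\mathbf s}=2$ reads $r(a,\widetilde{M_{B,\mathbf w}^{\mathbf s}})=r_{B,\mathbf 0}^{\mathbf s}(a,M)+r_{B,\mathbf w}^{\mathbf s}(a,M)$, and substituting (i) and solving for $r_{B,\mathbf w}^{\mathbf s}(a,M)$ gives the stated identity. For (iii), the map $\mathbf x\mapsto-\mathbf x$ preserves the value $\mathbf x^tM\mathbf x$ and sends a solution of $B\mathbf x\equiv\mathbf w\pmod{\mathbf s}$ to a solution of $B\mathbf x\equiv-\mathbf w\pmod{\mathbf s}$, so it is an involution, hence a bijection, between $R_{B,\mathbf w}^{\mathbf s}(a,M)$ and $R_{B,-\mathbf w}^{\mathbf s}(a,M)$; in particular $r_{B,-\mathbf w}^{\mathbf s}(a,M)=r_{B,\mathbf w}^{\mathbf s}(a,M)$. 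When $d_{B,\mathbf w}^{\mathbf s}=3$, the second auxiliary fact gives $r_{B,2\mathbf w}^{\mathbf s}(a,M)=r_{B,-\mathbf w}^{\mathbf s}(a,M)=r_{B,\mathbf w}^{\mathbf s}(a,M)$, so Lemma~\ref{formul} becomes $r(a,\widetilde{M_{B,\mathbf w}^{\mathbf s}})=r(a,M_{B,\mathbf 0}^{\mathbf s})+2\,r_{B,\mathbf w}^{\mathbf s}(a,M)$, which rearranges to the claimed formula.

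For (iv), I would argue by contradiction. Pick a prime $p$ dividing $\gcd(s,k)$ and suppose $\mathbf x\in R_{B,k\mathbf w}^{\mathbf s}(a,M)$. Since $p\mid s=\gcd(s_1,\dots,s_n)$ we have $p\mid s_i$ for all $i$, so reducing the congruence $B\mathbf x\equiv k\mathbf w\pmod{\mathbf s}$ modulo $p$ and using $p\mid k$ yields $B\mathbf x\equiv\mathbf 0\pmod p$; as $\gcd(\det B,s)=1$ makes $B$ invertible over $\f_p$, this forces $\mathbf x\equiv\mathbf 0\pmod p$, say $\mathbf x=p\mathbf x'$ with $\mathbf x'\in\z^n$. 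Then $a=\mathbf x^tM\mathbf x=p^2(\mathbf x')^tM\mathbf x'$, so $p^2\mid a$, contradicting $\gcd(a,s)=1$. Hence $R_{B,k\mathbf w}^{\mathbf s}(a,M)=\emptyset$ and $r_{B,k\mathbf w}^{\mathbf s}(a,M)=0$. The whole argument is elementary; the only point that needs a moment's thought is the second auxiliary fact used for (iii) — that $d_{B,\mathbf w}^{\mathbf s}=3$ identifies the $k=1$ and $k=2$ terms appearing in Lemma~\ref{formul} with each other — and I expect no genuine obstacle beyond that.
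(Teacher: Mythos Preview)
Your proof is correct and follows essentially the same approach as the paper: the paper derives (i) from Theorem~\ref{corr} with the choice $\mathbf y=\mathbf 0$, derives (ii) directly from Lemma~\ref{formul}, and simply declares (iii) and (iv) ``trivial'' without further comment. You have filled in exactly the details the paper omits, and in the same spirit.
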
 

\begin{proof} If $\mathbf w=\mathbf 0$, then we can take $\mathbf y=\mathbf 0$. Hence the first assertion comes directly from Theorem \ref{corr}. The second assertion comes directly from Lemma \ref{formul}. The  third and fourth assertions are trivial. 
\end{proof} 

Now, by using the above corollary and spinor representation theory of quadratic forms,  we reprove Theorems (1.9), (1.10) and (1,11) in \cite{be}. 
To do these, we define ternary quadratic forms:
$$
M_1=\langle1,1,1\rangle, \quad M_2=\langle1,1,2\rangle,\quad M_3=\langle1,4,12\rangle.
$$
Let  
$$
B_1=\begin{pmatrix} 1&0&0\\ 0&1&0\\ 0&0&1\end{pmatrix}, \quad B_2=\begin{pmatrix} 1&0&0\\ 0&1&0\\ 0&0&1\end{pmatrix}, \quad B_3=\begin{pmatrix} 1&0&0\\ 0&1&-1\\ 0&1&1\end{pmatrix}.
$$
and
$$
\begin{array}{rrr}
\mathbf w_1=(1,2,2)^t,  &\mathbf w_2=(1,4,0)^t, &\mathbf w_3=(3,0,2)^t,\\
\mathbf s_1=(4,8,8)^t,   &\mathbf s_2=(4,16,2)^t, &\mathbf s_3=(12,6,6)^t.
\end{array}
$$
\begin{cor} \label{berkovich}
Let $n$ be a positive integer. Then we have
\begin{itemize}

\item [(i)]$r_{B_1,\mathbf w_1}^{\mathbf s_1}(8n+1,M_1) = 0$  if and only if $8n+1=M^2$ and all prime divisors of $M$ are congruent to $1$ modulo $4$. 

\item [(ii)]$r_{B_2,\mathbf w_2}^{\mathbf s_2}(8n+1,M_2) = 0$  if and only if $8n+1=E^2$ and all prime divisors of $E$ are congruent to $1$ or $3$ modulo $8$.

\item [(iii)]$r_{B_3,\mathbf w_3}^{\mathbf s_3}(24n+1,M_3) = 0$ if and only if $24n+1=W^2$ and all prime divisors of $W$ are congruent to $1$ modulo $3$.

\end{itemize}
\end{cor}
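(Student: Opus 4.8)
The plan is to treat all three cases uniformly via Lemma~\ref{formul} and Corollary~\ref{want}, reducing $r_{B_i,\mathbf w_i}^{\mathbf s_i}(a,M_i)$ --- with $a=8n+1$ in (i), (ii) and $a=24n+1$ in (iii) --- to the representation number of a single lattice, and then evaluating that number with the help of Section~2 and \cite{sp2}. First one computes the modulus $d_i=d_{B_i,\mathbf w_i}^{\mathbf s_i}$ and the lattices $M_{B_i,\mathbf 0}^{\mathbf s_i}$, $\widetilde{M_{B_i,\mathbf w_i}^{\mathbf s_i}}$; here $d_1=d_2=4$, $d_3=12$, and all Gram entries of $M_{B_i,\mathbf 0}^{\mathbf s_i}$ are even, so by Corollary~\ref{want}(i) the $k=0$ summand in Lemma~\ref{formul} is $r(a,M_{B_i,\mathbf 0}^{\mathbf s_i})=0$ (as $a$ is odd). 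For $1\le k\le d_i-1$ one checks $r_{B_i,k\mathbf w_i}^{\mathbf s_i}(a,M_i)=0$ unless $\gcd(k,d_i)=1$: when $k$ is even the defining congruence forces the first coordinate of any representation to be even, contradicting that $a$ is odd (in cases (i), (ii) one may instead apply Corollary~\ref{want}(iv)); and in (iii), if $3\mid k$ the congruences force $3\mid x_1,x_2,x_3$, so $3\mid a$, which is impossible since $a\equiv1\pmod 3$. The $\phi(d_i)$ surviving cosets all give the common value $r_{B_i,\mathbf w_i}^{\mathbf s_i}(a,M_i)$: for $k\equiv-1$ this is Corollary~\ref{want}(iii), and in (iii) the cosets with $k\equiv 5,7\pmod{12}$ are matched to it using the sign-change isometries of the diagonal form $M_3=\langle1,4,12\rangle$ (negating the last two coordinates). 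Lemma~\ref{formul} then yields
$$
r_{B_i,\mathbf w_i}^{\mathbf s_i}(a,M_i)=\frac1{\phi(d_i)}\,r\!\left(a,\widetilde{M_{B_i,\mathbf w_i}^{\mathbf s_i}}\right),\qquad \phi(d_1)=\phi(d_2)=2,\ \ \phi(d_3)=4.
$$

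Next I would make $\widetilde{M_{B_i,\mathbf w_i}^{\mathbf s_i}}$ explicit by choosing a $\z$-basis of $\z\mathbf y+\mathcal M_{B_i,\mathbf 0}^{\mathbf s_i}$, and show that after an integral change of variables it decomposes so that $r(a,\widetilde{M_{B_i,\mathbf w_i}^{\mathbf s_i}})$ becomes a small $\z$-linear combination of $\mathfrak r_1(a)$ or $\mathfrak r_2(a)$ (representation numbers of the class-number-one forms $\langle1,1,1\rangle$, $\langle1,1,2\rangle$) and of the representation number $r(a,L_j)$ of one of the spinor-genus-two lattices of Lemma~\ref{Bell form}; in case (iii) the class-number-one form that appears is governed by $\q(\sqrt{-3})$ rather than by $\q(\sqrt{-1})$, $\q(\sqrt{-2})$, and the $L_j$-term is replaced by the spinor correction of \cite{sp2} for $\widetilde{M_{B_3,\mathbf w_3}^{\mathbf s_3}}$. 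For example, sorting the representations of $8n+1$ by $x^2+y^2+z^2$ according to whether the two even coordinates lie in $4\z$ or in $2+4\z$ gives
$$
r_{B_1,\mathbf w_1}^{\mathbf s_1}(8n+1,M_1)=\tfrac18\Bigl(\tfrac13\,\mathfrak r_1(8n+1)-r(8n+1,L_4)\Bigr).
$$
Substituting the $(a,\alpha)=(0,1)$ formulas of Theorem~\ref{main} and the Minkowski--Siegel identity~\eqref{minsi-3} then yields, in each case,
$$
r_{B_i,\mathbf w_i}^{\mathbf s_i}(a,M_i)=\gamma_i\,\mathfrak r_{\varepsilon_i}(a)\;-\;\beta_i\,\delta_{\square}(a)\,\eta_i(a)\,\sqrt a,
$$
where $\gamma_i,\beta_i>0$ are fixed rationals, $\varepsilon_i\in\{1,2\}$, and $\eta_i(a)=\pm1$ is an explicit sign built from $(-1)^{(\sqrt a-1)/2}$ (together with an extra Kronecker factor in (ii), (iii)).

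Finally I would read off the vanishing criterion. Since $a\equiv1\pmod 8$ is never of the form $4^{\ell}(8b+7)$, we have $\mathfrak r_1(a),\mathfrak r_2(a)\ge 1$, so the first term above is strictly positive; hence $r_{B_i,\mathbf w_i}^{\mathbf s_i}(a,M_i)=0$ forces $a=T^2$ to be a square, $\eta_i(T^2)=+1$, and $\gamma_i\,\mathfrak r_{\varepsilon_i}(T^2)=\beta_i\,T$. A short sign computation shows $\eta_i(T^2)=+1$ exactly when $T\equiv1\pmod 4$ in (i), $T\equiv1$ or $3\pmod 8$ in (ii), and $T\equiv1\pmod 3$ in (iii). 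On the other hand $\mathfrak r_{\varepsilon_i}(T^2)$ is a multiplicative function of $T$ whose Euler factor at an odd prime $p$ attains its smallest possible value --- the one for which $\gamma_i\,\mathfrak r_{\varepsilon_i}(T^2)=\beta_i\,T$ --- precisely when $p$ splits in $\q(\sqrt{-1})$, $\q(\sqrt{-2})$, $\q(\sqrt{-3})$ respectively, i.e.\ when $p\equiv1\pmod 4$, $p\equiv1,3\pmod 8$, $p\equiv1\pmod 3$, and strictly exceeds it otherwise; this is the Gauss--Hurwitz class-number evaluation of $r_3$ and its $\q(\sqrt{-2})$- and $\q(\sqrt{-3})$-analogues. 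Therefore $\gamma_i\,\mathfrak r_{\varepsilon_i}(T^2)=\beta_i\,T$ holds if and only if every prime divisor of $T$ satisfies the stated congruence, a condition that in turn forces $\eta_i(T^2)=+1$; this gives the three equivalences.

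The step I expect to be the main obstacle is the second one: producing the correct $\z$-basis of $\z\mathbf y+\mathcal M_{B_i,\mathbf 0}^{\mathbf s_i}$ and the change of variables that cleanly isolates, inside $r(a,\widetilde{M_{B_i,\mathbf w_i}^{\mathbf s_i}})$, the contribution of a class-number-one ternary form on one side and that of a Section~2 lattice (or a single $\q(\sqrt{-3})$-spinor correction) on the other, so that Theorem~\ref{main} and \cite{sp2} become applicable. Tightly linked to it is the arithmetic fact used in the last step, that the $p$-local factor of the multiplicative function $\mathfrak r_{\varepsilon_i}(T^2)$ is minimal if and only if $\left(\tfrac{-d_i}{p}\right)=1$ for $d_i\in\{1,2,3\}$ --- the precise place where the congruence conditions on the prime divisors of $T$ enter.
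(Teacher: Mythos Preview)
Your first step---using Lemma~\ref{formul} and Corollary~\ref{want} to collapse the $d_i$ cosets down to $\phi(d_i)$ copies of $r_{B_i,\mathbf w_i}^{\mathbf s_i}(a,M_i)$---is exactly what the paper does, and your handling of the $\gcd(k,d_i)>1$ cosets and of the sign-change bijections in case~(iii) is correct.

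Where you diverge is in the second step. The paper does \emph{not} try to express $r(a,\widetilde{M_{B_i,\mathbf w_i}^{\mathbf s_i}})$ through the Bell lattices $L_j$ of Section~2 and then feed in Theorem~\ref{main}. Instead it simply observes that $\widetilde{M_{B_i,\mathbf w_i}^{\mathbf s_i}}$ is itself (after, in case~(ii), a short elementary rewriting of the congruences) isometric to an explicit lattice $K_i$ whose spinor genus contains only one class; these $K_i$ are \emph{new} lattices, not Bell forms (though $K_2$ happens to coincide with $L_3'$). Once that is known, $r(a,K_i)=0$ holds exactly for the spinor exceptional integers of $K_i$ in the relevant residue class, and the description of those integers comes straight out of \cite{sp2}. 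No explicit formula, no multiplicative analysis of $\mathfrak r_{\varepsilon}(T^2)$, no appeal to Theorem~\ref{main} is needed.

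Your route does work---your case~(i) sorting argument giving $r_{B_1,\mathbf w_1}^{\mathbf s_1}(8n+1,M_1)=\tfrac18\bigl(\tfrac13\mathfrak r_1(8n+1)-r(8n+1,L_4)\bigr)$ is correct, and substituting Theorem~\ref{main} yields the right explicit formula---but it is longer and requires a separate combinatorial decomposition in each case, followed by the Gauss--Hurwitz evaluation of $\mathfrak r_{\varepsilon}(T^2)$ at the end. The paper's approach buys uniformity and brevity: the single observation ``$\widetilde{M}\simeq K_i$ has $h_s=1$'' replaces both your decomposition step and your multiplicative endgame. What your approach buys is the explicit closed formula for $r_{B_i,\mathbf w_i}^{\mathbf s_i}(a,M_i)$, which the paper only records afterwards as a remark (for case~(ii)).
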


\begin{proof}
First, we define quadratic forms
$$
K_1=\begin{pmatrix} 9&4&2\\ 4&16&8\\ 2&8&36\end{pmatrix},\quad K_2=\begin{pmatrix} 4&0&2\\ 0&8&0\\ 2&0&17\end{pmatrix},\quad K_3=\begin{pmatrix} 9&0&0\\ 0&16&8\\ 0&8&112\end{pmatrix}.
$$
One may easily show that the spinor genus of $K_i$ contains only one class for each $i=1,2,3$. 
 Note that 
$\mathcal M_{B_1,\mathbf 0}^{\mathbf s_1}=\z(4,0,0)^t+\z(0,8,0)^t+\z(0,0,8)^t $ and
$$
 \widetilde{\mathcal M_{B_1,\mathbf w_1}^{\mathbf s_1}}=\z(1,2,2)^t+\z(4,0,0)^t+\z(0,8,0)^t. 
$$
Hence $d_{B_1,\mathbf w_1}^{\mathbf s_1}=4$ in this case, and one may easily show that 
$$
M_{B_1,\mathbf 0}^{\mathbf s_1}=\begin{pmatrix} 16&0&0\\0&64&0\\0&0&64\end{pmatrix} \ \ \text{and} \ \   \widetilde{M_{B_1,\mathbf w_1}^{\mathbf s_1}}\simeq K_1. 
$$ 
Since $r(8n+1,M_{B_1,\mathbf 0}^{\mathbf s_1})=0$,  we have $2r_{B_1,\mathbf w_1}^{\mathbf s_1}(8n+1,M_1)=r(8n+1,K_1)$ by (iii) and (iv) of Corollary \ref{want}. 
 Since the  spinor genus of $K_1$ contains only one class, it represents all integers of the form $8n+1$ except spinor exceptional integers of it.  
Hence one may easily show that $r(8n+1,K_1) =0$ if and only if $8n+1=M^2$ and all prime divisors of $M$ are congruent to $1$ modulo $4$.

We consider the second case which corresponds to Theorem (1.10) of \cite{be}. Note that 
$$
\begin{array} {ll}
& r_{B_2,\mathbf w_2}^{\mathbf s_2}(8n+1,M_2)\\
&=\vert \{ (x,y,z)^t \in \z^3  :  x^2\!+\!y^2\!+\!2z^2=8n+1, (x,y,z)\equiv (1,4,0)  (\text{mod } (4,16,2) )\}\vert \\
&=\vert \{ (x,y,z)^t \in \z^3 : (4x+1)^2+(16y+4)^2+2(2z)^2=8n+1\}\vert \\ 
&=\vert \{ (x,y,z)^t \in \z^3 : (4x+1)^2+16(4y+1)^2+8z^2=8n+1\}\vert \\
&=\frac 14 \vert \{ (x,y,z)^t \in \z^3 : (2x+1)^2+16(2y+1)^2+8z^2=8n+1\}\vert \\
&=\frac14 \vert \{ (x,y,z)^t \in \z^3  : x^2\!+\!16y^2\!+\!8z^2\!=\!8n+1, \!(x,y,z)\!\equiv \!(1,1,0)  (\text{mod } (2,2,1) )\}\vert. \\
\end{array}
$$
Hence if we define $\mathbf w_2'=(1,1,0)$, $\mathbf s_2'=(2,2,1)$ and $M_2'=\langle 1,16,8\rangle$, then 
$$
r_{B_2,\mathbf w_2}^{\mathbf s_2}(8n+1,M_2)=\frac 14 r_{B_2,\mathbf w_2'}^{\mathbf s_2'}(8n+1,M_2').
$$
Note that 
$$
M_{B_2,\mathbf 0}^{\mathbf s_2'}=\langle 4,64,8\rangle \qquad \text{and} \qquad \widetilde{M_{B_2,\mathbf \mathbf w_2'}^{\mathbf s_2'}}\simeq K_2.
$$
Therefore by Corollary \ref{want},  we have
$$
r_{B_2,\mathbf w_2}^{\mathbf s_2}(8n+1,M_2)=\frac14 r(8n+1,K_2).
$$
The assertion follows directly from the fact that the spinor genus of $K_2$ contains only one class.  

Now, we consider the third case. One may easily show that 
$\mathcal M_{B_3,\mathbf 0}^{\mathbf s_3}=\z(12,0,0)^t\!+\z(0,3,-3)^t+\z(0,3,3)^t $ and
$$
 \widetilde{\mathcal M_{B_3,\mathbf w_3}^{\mathbf s_3}}=\z(3,0,0)^t+\z(0,3,-3)^t+\z(0,1,1)^t. 
$$
Hence $d_{B_3,\mathbf w_3}^{\mathbf s_3}=12$ in this case, and one may also  show that 
$$
M_{B_3,\mathbf 0}^{\mathbf s_3}=\begin{pmatrix} 144&0&0\\0&144&-72\\0&-72&144\end{pmatrix} \ \ \text{and} \ \   \widetilde{M_{B_3,\mathbf w_3}^{\mathbf s_3}}\simeq K_3. 
$$ 
Note that $r_{B_3,k\mathbf w_3}^{\mathbf s_3}(24n+1,M_3)=0$  for any $k$ such that $(k,12)\ne1$ by (iv) of Corollary \ref{want}. Since the map $(x_1,x_2,x_3) \to (x_1,-x_2,-x_3)$ from $R_{B_3,\mathbf w_3}^{\mathbf s_3}(24n+1,M_3)$ to $R_{B_3,5\mathbf w_3}^{\mathbf s_3}(24n+1,M_3)$  is bijective,  Lemma \ref{formul} implies that 
$$
4r_{B_3,\mathbf w_3}^{\mathbf s_3}(24n+1,M_3)=r(24n+1,K_3). 
$$
Since the  spinor genus of $K_3$ contains only one class, it represents all integers of the form $24n+1$ except spinor exceptional integers of it.  
Hence one may easily show that $r(24n+1,K_3) =0$ if and only if $24n+1=W^2$ and all prime divisors of $W$ are congruent to $1$ modulo $3$.  \end{proof}

\begin{rmk} {\rm Since the spinor genus of each lattice $K_i$ contains only one class, we may  give a formula on   $r_{B_i,\mathbf w_i}^{\mathbf s_i}(8n+1,M_i)$ by using the number of representations of $K_i$  (and also $M_i$) for any $i=1,2,3$. For example, we have
$$
\begin{array} {ll} 
\!\!r_{B_2,\mathbf w_2}^{\mathbf s_2}(8n+1,M_2) \!\!\! &\!\!=\frac14 r(8n+1,K_2)=\frac 14 r(8n+1,L_3')\\
                                           \!\!\! &\!\!=\frac1{16}\mathfrak{r}_2(8n+1) -\frac14  \delta_{\square}(8n+1)\cdot (-1)^{\frac{\sqrt{8n+1}-1}{2}} \cdot (-1)^{n} \cdot \sqrt{8n+1}.
\end{array}               
 $$}
\end{rmk}

If we use spinor genera having only one class,  we may have some more results similar to the above. For some examples of  ternary quadratic forms whose spinor genera consists of only one class, 
see \cite {behh}  or Section 7 of \cite {ja}. 
 
Define quadratic forms
$$
M_4=\langle1,1,1\rangle,\quad M_5=\langle1,1,1\rangle,\quad M_6=\langle1,1,2\rangle.
$$
We also define 
$$
B_4=B_5=B_6=\begin{pmatrix} 1&0&0\\ 0&1&0\\ 0&0&1\end{pmatrix}, $$
and
$$
\begin{array}{rrr}
\mathbf w_4=(1,0,2)^t,  &\mathbf w_5=(1,2,2)^t,   &\mathbf w_6=(1,1,2)^t,\\
\mathbf s_4=(2,2,4)^t,    &\mathbf s_5=(2,4,4)^t,   &\mathbf s_6=(2,2,4)^t.
\end{array}
$$

\begin{cor} \label{last1}
Let $n$ be a positive integer. Then we have
\begin{itemize}

\item [(i)]$r_{B_4,\mathbf w_4}^{\mathbf  s_4}(8n+1,M_4) =0$ if and only if $8n+1=M^2$ and all prime divisors of $M$ are congruent to $1$ modulo $4$ and 
$r_{B_4,\mathbf  w_4}^{\mathbf s_4}(8n+5,M_4) > 0.$

\item [(ii)]$r_{B_5,\mathbf w_5}^{\mathbf  s_5}(8n+1,M_5)=0$  if and only if $8n+1=M^2$ and all prime divisors of $M$ are congruent to $1$ modulo $4$.

\item [(iii)]$r_{B_6,\mathbf w_6}^{\mathbf  s_6}(16n+2,M_6)=0$  if and only if $16n+2=2M^2$ and all prime divisors of $M$ are congruent to $1$ modulo $4$ and
$r_{B_6,\mathbf w_6}^{\mathbf s_6}(16n+10,M_6) > 0$.
\end{itemize}
\end{cor}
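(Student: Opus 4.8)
The plan is to imitate the proof of Corollary~\ref{berkovich}, turning each congruence‑constrained count $r_{B_i,\mathbf w_i}^{\mathbf s_i}(\,\cdot\,,M_i)$ into a genuine representation number of an auxiliary ternary lattice whose spinor genus is a single class, and then reading off the vanishing condition from the known list of spinor exceptional integers of a sum of three squares (or of $\langle1,1,2\rangle$). First I would, for each $i\in\{4,5,6\}$, compute the sublattice $\mathcal M_{B_i,\mathbf 0}^{\mathbf s_i}$, the enlarged module $\widetilde{\mathcal M_{B_i,\mathbf w_i}^{\mathbf s_i}}=\z\mathbf y_i+\mathcal M_{B_i,\mathbf 0}^{\mathbf s_i}$, the index $d_{B_i,\mathbf w_i}^{\mathbf s_i}$, and the two Gram matrices $M_{B_i,\mathbf 0}^{\mathbf s_i}$ and $\widetilde{M_{B_i,\mathbf w_i}^{\mathbf s_i}}$, exactly as in the previous corollary. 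For $i=5$ one expects $d=4$ and the situation to run completely parallel to part~(i) of Corollary~\ref{berkovich}: $M_{B_5,\mathbf 0}^{\mathbf s_5}$ will not represent $8n+1$, so parts~(iii)--(iv) of Corollary~\ref{want} give $2\,r_{B_5,\mathbf w_5}^{\mathbf s_5}(8n+1,M_5)=r(8n+1,N_5)$ for the relevant $N_5\simeq\widetilde{M_{B_5,\mathbf w_5}^{\mathbf s_5}}$, and $h_s(N_5)=1$ then yields the stated equivalence, the excluded square classes being precisely the odd squares all of whose prime factors are $\equiv1\pmod4$.

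For parts~(i) and~(iii) the new feature is the extra clause $r_{B_4,\mathbf w_4}^{\mathbf s_4}(8n+5,M_4)>0$ (resp.\ $r_{B_6,\mathbf w_6}^{\mathbf s_6}(16n+10,M_6)>0$). Here I expect that the auxiliary lattice $N_4$ (resp.\ $N_6$) has class number $1$ in its spinor genus but that the integers $8n+1$ and $8n+5$ (resp.\ $16n+2$ and $16n+10$) lie in the \emph{same} relevant square class for the genus — i.e.\ the spinor exceptional square classes are not separated modulo the lower power of $2$ — so that $8n+1$ is represented by $N_4$ iff it is not a spinor exception iff $8n+5$ (being in the complementary residue but the same square class structure) is still represented. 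The way I would make this precise is: show via Corollary~\ref{want} that $c\cdot r_{B_4,\mathbf w_4}^{\mathbf s_4}(8n+1,M_4)=r(8n+1,N_4)$ and $c\cdot r_{B_4,\mathbf w_4}^{\mathbf s_4}(8n+5,M_4)=r(8n+5,N_4)$ for one constant $c$, then invoke that $N_4$ represents every integer in the relevant $p$‑adic square classes outside the finite list of spinor exceptional square classes of $N_4$, and finally identify that list. The clause $r(8n+5,N_4)>0$ is exactly the assertion that $8n+5$ is \emph{not} one of those exceptional classes, which forces the vanishing of $r(8n+1,N_4)$ to happen only for $8n+1=M^2$ with all prime divisors of $M$ congruent to $1\pmod4$.

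The key inputs from the excerpt are Corollary~\ref{want}(i)--(iv) (to pass between the constrained count and the lattice representation number, using in particular the $\mathbf x\mapsto-\mathbf x$ symmetry when $d=3$ and the coprimality vanishing~(iv)), together with the fact — cited from \cite{behh} and Section~7 of \cite{ja} — that each of the relevant auxiliary ternary lattices has one class per spinor genus, so that Schulze‑Pillot's formula (Korollar~2 of \cite{sp2}) controls exactly which square classes fail to be represented. The main obstacle I anticipate is \emph{bookkeeping rather than conceptual}: correctly diagonalizing/reducing $\widetilde{M_{B_i,\mathbf w_i}^{\mathbf s_i}}$ to recognize it as the advertised spinor‑regular lattice, tracking the scaling constant $c$ through the change of variables (as in the displayed chain of equalities for $M_2$ in Corollary~\ref{berkovich}), and pinning down precisely which square classes are spinor exceptional for $N_4,N_5,N_6$ so that the two‑part statements in (i) and (iii) come out in the clean form given. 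Once those identifications are in hand, each equivalence follows by the same one‑line argument as in Corollary~\ref{berkovich}.
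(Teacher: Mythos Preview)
Your overall strategy---reduce each constrained count to a representation number of an auxiliary ternary lattice via Corollary~\ref{want}, then invoke that its spinor genus is a single class---is exactly what the paper does. Two points need correction, however.

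First, you have misparsed parts~(i) and~(iii). The clause ``and $r_{B_4,\mathbf w_4}^{\mathbf s_4}(8n+5,M_4)>0$'' is a \emph{separate} assertion, not part of the biconditional; the corollary is asserting two things, namely the iff for $8n+1$ \emph{and} the positivity for $8n+5$. Once you see this, your discussion of $8n+1$ and $8n+5$ lying ``in the same square class'' collapses: the positivity of $r(8n+5,N_4)$ is immediate because $8n+5$ is never a perfect square, hence never spinor exceptional, and the auxiliary lattice (being alone in its spinor genus) represents every locally represented non-exceptional integer. The same remark applies to $16n+10$ in~(iii). So there is no ``new feature'' in (i) and (iii); all three parts are structurally identical.

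Second, your guess $d_{B_5,\mathbf w_5}^{\mathbf s_5}=4$ is wrong: with $\mathbf w_5=(1,2,2)^t$ and $\mathbf s_5=(2,4,4)^t$ one has $2\mathbf w_5\equiv\mathbf 0\pmod{\mathbf s_5}$, so $d=2$. In fact $d=2$ in all three cases here, so Corollary~\ref{want}(ii) applies directly (no need for the $\mathbf x\mapsto-\mathbf x$ trick or the coprimality vanishing~(iv)). The paper identifies the three auxiliary lattices $\widetilde{M_{B_i,\mathbf w_i}^{\mathbf s_i}}$ as precisely $L_2'$, $L_4'$, and $2L_1'$ from Remark~\ref{other}, which is why the spinor-exceptional analysis for them is already available from Section~2 rather than requiring a fresh appeal to \cite{behh} or \cite{ja}.
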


\begin{proof}
Recall that we defined in Section 2, that 
$$
L_2'=\begin{pmatrix} 4&0&2\\ 0&4&0\\ 2&0&5\end{pmatrix},\quad L_4'=\begin{pmatrix} 4&2&2\\ 2&9&1\\ 2&1&9\end{pmatrix},\quad L_1'=\begin{pmatrix} 2&0&1\\ 0&2&1\\ 1&1&5\end{pmatrix},
$$
where each spinor genus contains only one class.  Note that
$\mathcal M_{B_4,\mathbf 0}^{\mathbf s_4}=\z(2,0,0)^t+\z(0,2,0)^t+\z(0,0,4)^t $ and
$$
 \widetilde{\mathcal M_{B_4,\mathbf w_4}^{\mathbf s_4}}=\z(1,0,2)^t+\z(2,0,0)^t+\z(0,2,0)^t. 
$$
Hence $d_{B_4,\mathbf w_4}^{\mathbf s_4}=2$ in this case, and one may easily show that 
$$
M_{B_4,\mathbf 0}^{\mathbf s_4}=\begin{pmatrix} 4&0&0\\0&4&0\\0&0&16\end{pmatrix} \ \ \text{and} \ \   \widetilde{M_{B_4,\mathbf w_4}^{\mathbf s_4}}\simeq L_2'. 
$$ 
Now everything follows from Corollary \ref{want} and the fact that the spinor genus of $K_4$ contains only one class.
The proofs of the second and the third cases are quite similar to this. One may use the fact that 
$$
d_{B_5\mathbf w_5}^{\mathbf s_5}=2, \  d_{B_6,\mathbf w_6}^{\mathbf s_6}=2  \quad \text{and} \quad    \widetilde{M_{B_5,\mathbf w_5}^{\mathbf s_5}}\simeq L_4', \ 
  \widetilde{M_{B_6,\mathbf w_6}^{\mathbf s_6}}\simeq 2L_1'. 
   $$
This completes the proof. \end{proof}

\begin{cor}
Let $n$ be a positive integer. Then we have

$$
\begin{array} {rl}
\rm {(i)} \ \ r_{B_4,\mathbf w_4}^{\mathbf  s_4}(8n+1,M_4)\!\!\!&= r(8n+1,L_2')\\
  &= \frac16\mathfrak{r}_1(8n+1) -\delta_{\square}(8n+1)\cdot (-1)^{\frac{\sqrt{8n+1}-1}{2}}\cdot \sqrt{8n+1},\\
  &\\
r_{B_4,\mathbf  w_4}^{\mathbf s_4}(8n+5,M_4) \!\!\!&= r(8n+5,L_2') = \frac16\mathfrak{r}_1(8n+5).
\end{array}
$$  

$$
\begin{array} {rl}
\rm {(ii)}  \  \   r_{B_5,\mathbf w_5}^{\mathbf  s_5}(8n+1,M_5) \!\!\!&= r(8n+1,L_4')\\
  &= \frac16\mathfrak{r}_1(8n+1) -\delta_{\square}(8n+1)\cdot (-1)^{\frac{\sqrt{8n+1}-1}{2}}\cdot \sqrt{8n+1}.
\end{array}
$$  

$$
\begin{array} {rl}
\rm {(iii)} \ \  r_{B_6,\mathbf w_6}^{\mathbf  s_6}(16n+2,M_6) \!\!\!&= r(8n+1,L_1')\\
  &= \frac13 \mathfrak{r}_1(8n+1) -\delta_{\square}(8n+1)\cdot (-1)^{\frac{\sqrt{8n+1}-1}{2}} \cdot 2\sqrt{8n+1},\\
  & \\
r_{B_6,\mathbf w_6}^{\mathbf s_6}(16n+10,M_6)\!\!\!&= r(8n+5,L_1') = \frac13\mathfrak{r}_1(8n+5).
\end{array} 
$$

\end{cor}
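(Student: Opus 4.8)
The plan is to carry the computation in the proof of Corollary \ref{last1} one step further, feeding in the formulas of Theorem \ref{main}. From that proof we already know that $d_{B_4,\mathbf w_4}^{\mathbf s_4}=d_{B_5,\mathbf w_5}^{\mathbf s_5}=d_{B_6,\mathbf w_6}^{\mathbf s_6}=2$, that $\widetilde{M_{B_4,\mathbf w_4}^{\mathbf s_4}}\simeq L_2'$, $\widetilde{M_{B_5,\mathbf w_5}^{\mathbf s_5}}\simeq L_4'$, $\widetilde{M_{B_6,\mathbf w_6}^{\mathbf s_6}}\simeq 2L_1'$, and that the ``$\mathbf w=\mathbf 0$'' forms $M_{B_4,\mathbf 0}^{\mathbf s_4}=\langle4,4,16\rangle$, $M_{B_5,\mathbf 0}^{\mathbf s_5}=\langle4,16,16\rangle$, $M_{B_6,\mathbf 0}^{\mathbf s_6}=\langle4,4,32\rangle$ represent only multiples of $4$. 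Hence part (ii) of Corollary \ref{want} gives
\[
r_{B_4,\mathbf w_4}^{\mathbf s_4}(8n+j,M_4)=r(8n+j,L_2'),\qquad r_{B_5,\mathbf w_5}^{\mathbf s_5}(8n+1,M_5)=r(8n+1,L_4')
\]
for $j\in\{1,5\}$, while, since $r(2m,2L_1')=r(m,L_1')$ trivially,
\[
r_{B_6,\mathbf w_6}^{\mathbf s_6}(16n+2,M_6)=r(8n+1,L_1'),\qquad r_{B_6,\mathbf w_6}^{\mathbf s_6}(16n+10,M_6)=r(8n+5,L_1').
\]
So the whole statement reduces to evaluating $r(k,L_i')$ for $k\equiv1,5\pmod8$ and $i=1,2,4$.

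For this I would argue exactly as in the proof of Theorem \ref{main}: the genus of $L_i$ consists of the two classes $L_i$ and $L_i'$ with $o(L_i)=o(L_i')$ (checked directly from the Gram matrices of Remark \ref{other}), so Lemma \ref{multiple} gives $\tfrac12 r(k,L_i)+\tfrac12 r(k,L_i')=c_i(a,\alpha)\,\mathfrak r_1(k)$; combined with the value of $r(k,L_i)$ from Theorem \ref{main} this yields
\[
r(k,L_i')=2c_i(a,\alpha)\,\mathfrak r_1(k)-r(k,L_i),
\]
whose main term is $c_i(a,\alpha)\mathfrak r_1(k)$ and whose square-correction term is exactly that of Theorem \ref{main} with the sign reversed. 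Substituting $c_1(0,1)=\tfrac13$, $c_2(0,1)=c_4(0,1)=\tfrac16$ produces the three ``$8n+1$''/``$16n+2$'' lines; for $k\equiv5\pmod8$ the integer $k$ is never a perfect square, so $\delta_{\square}(k)=0$ and the formula collapses to $c_1(0,5)\mathfrak r_1(k)=\tfrac13\mathfrak r_1(8n+5)$ for $L_1'$ and $c_2(0,5)\mathfrak r_1(k)=\tfrac16\mathfrak r_1(8n+5)$ for $L_2'$, which are the remaining two lines.

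I do not expect a genuine obstacle: the result is a direct consequence of Corollaries \ref{want} and \ref{last1} together with Lemma \ref{multiple} and Theorem \ref{main}. The only points demanding (routine) care are the equalities $o(L_i)=o(L_i')$ for $i=1,2,4$, the remark that each $M_{B_i,\mathbf 0}^{\mathbf s_i}$ represents only multiples of $4$ (so it contributes $0$ to the relevant counts), and---arguably the most error-prone bookkeeping---matching the congruence class of the argument of $r$ against the correct row $(a,\alpha)$ of Table 1 before reading off $c_i(a,\alpha)$.
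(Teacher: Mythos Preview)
Your proposal is correct and matches the paper's own proof, which simply states that the corollary ``is a direct consequence of Remark \ref{other}, Theorem \ref{main} and Corollary \ref{last1}.'' You have merely unpacked that sentence: Corollary \ref{last1} (with Corollary \ref{want}(ii)) yields the first equalities $r_{B_i,\mathbf w_i}^{\mathbf s_i}(\cdot,M_i)=r(\cdot,L_j')$, and the formulas for $r(k,L_j')$ are obtained by subtracting the $r(k,L_j)$ values of Theorem \ref{main} from twice the genus averages of Lemma \ref{multiple}, exactly as in equations \eqref{1} and \eqref{3} of the proof of Theorem \ref{main}. The side checks you flag---$o(L_i)=o(L_i')$ for $i=1,2,4$, the vanishing of $r(8n+j,M_{B_i,\mathbf 0}^{\mathbf s_i})$, and reading off the correct $c_i(0,\alpha)$ from Table~1---are all routine and correct.
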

\begin{proof}
The corollary is a direct consequence of Remark \ref {other}, Theorem \ref {main} and Corollary \ref {last1}. 
\end{proof}


\begin{thebibliography}{abcd}

\bibitem {b} E. T. Bell, {\em The numbers of representations of integers in certain forms $ax^2+by^2+cz^2$}, Amer. Math. Monthly, \textbf{31}(1924), 126--131.



\bibitem {behh} J. W. Benham,  A. G. Earnest, J. S. Hsia and D. C. Hung, {\em Spinor regular positive ternary quadratic forms}, J. London Math. Soc.(2) \textbf{42}(1990), 1--10. 


\bibitem {be} A. Berkovich, {\em On the Gauss EYPHKA theorem and some allied inequalities}, J. Number Theory \textbf{148}(2015), 1--18.

\bibitem {cl2}  S. Cooper and H.Y. Lam {\em On the diophantine equation $n^2=x^2+by^2+cz^2$}, J. Number Theory, \textbf{133}(2013),  719--737.

\bibitem {cl} W. H{\"u}rlimann, {\em Cooper and Lam's conjecture for generalized Bell ternary quadratic forms}, J. Number Theory \textbf{158}(2016), 23-32. 



\bibitem {ja} W. C. Jagy, Integral positive ternary quadratic forms, arXiv:1010.3677v1 [math.NT].




\bibitem {ki} Y. Kitaoka, {\em Arithmetic of quadratic forms}, Cambridge University Press, 1993.


\bibitem{k} M. Kneser, {\em Darstellungsma\ss e  indefiniter quadratischer Formen}, (German) Math. Z. \textbf{77}(1961), 188--194.

\bibitem  {om} O. T. O'Meara, {\em Introduction to quadratic forms}, Springer Verlag,
New York, 1963.

\bibitem {sp1} R. Schulze-Pillot, {\em Darstellung durch Spinorgeschlechter tern\"arer quadratischer Formen}, J. Number Theory \textbf{12}(1980), 529--540.

\bibitem {sp2} R. Schulze-Pillot, {\em Thetareihen positiv definiter quadratischer Formen}, Invent. Math. \textbf{75}(1984), 283-229.

\bibitem {ya} T. Yang, {\em An explicit formula for local densities of quadratic forms}, J. Number Theory \textbf{72}(1998), 309-356.

\bibitem {wa} G. L. Watson, {\em Transformations of a quadratic form which do not increase the class-number}, Proc. London Math. Soc. \textbf{12}(1962), 577--587.

\bibitem {w} A. Weil, {\em Sur la th\'eorie des formes quadratiques}, Centre Belge Rech. Math., Colloque Theor. Groupes algebr., Bruxelles 1962, 9--22.


\end{thebibliography}
\end{document}